\numberwithin{equation}{section}
\setlist[enumerate,1]{label={\rm(\arabic*)}, ref={\rm\arabic*}} 
\NewDocumentCommand\UpArrow{O{2.0ex} O{black}}{%
    \mathrel{\tikz[baseline] \draw [line width=0.5pt, decoration={markings,mark=at position 1 with {\arrow[scale=2, line width=0.25pt]{to}}},  postaction={decorate}, #2] (0,0) -- ++(0,#1);}
 }
\newcommand{\expl}[2]{\underset{\mathclap{\minibox[c]{$\UpArrow[10pt]$\\ \fbox{\footnotesize #2}}}}{#1}}
\newcommand{\explshift}[3]{\underset{\mathclap{\minibox[c]{$\UpArrow[10pt]$\\ \hspace{#1} \fbox{\footnotesize #3}}}}{#2}}
\newcommand{\bA}{\mathbb{A}}
\newcommand{\bF}{\mathbb{F}}
\newcommand{\bN}{\mathbb{N}}
\newcommand{\bP}{\mathbb{P}}
\newcommand{\bQ}{\mathbb{Q}}
\newcommand{\bZ}{\mathbb{Z}}
\newcommand{\scr}{\mathcal}
\newcommand{\sA}{\scr{A}}
\newcommand{\sC}{\scr{C}}
\newcommand{\sE}{\scr{E}}
\newcommand{\sF}{\scr{F}}
\newcommand{\sH}{\scr{H}}
\newcommand{\sO}{\scr{O}}
\DeclareMathOperator{\characteristic}{{char}}
\DeclareMathOperator{\sstab}{{ss}}
\DeclareMathOperator{\Tr}{Tr}
\DeclareMathOperator{\coker}{{coker}}
\DeclareMathOperator{\Ker}{{Ker}}
\DeclareMathOperator{\Id}{{Id}}
\DeclareMathOperator{\im}{{im}}
\DeclareMathOperator{\length}{{length}}
\DeclareMathOperator{\rel}{{rel}}
\DeclareMathOperator{\reg}{reg}
\DeclareMathOperator{\rk}{{rk}}
\DeclareMathOperator{\Spec}{{Spec}}
\newcommand{\factor}[2]{\left. \raise 2pt\hbox{\ensuremath{#1}} \right/
        \hskip -2pt\raise -2pt\hbox{\ensuremath{#2}}}
\newcommand{\lra}{\longrightarrow}
\newcommand{\supth}[1]{\ensuremath{#1^{\mathrm{th}}}}
\title{Pseudo-effectivity of the relative canonical divisor and uniruledness in positive characteristic}
\author{Zsolt Patakfalvi}
\address{\'Ecole Polytechnique F\'ed\'erale de Lausanne (EPFL), MA C3 635, Station 8, 1015 Lausanne, Switzerland}
\email{zsolt.patakfalvi@epfl.ch}
\begin{document}

%%%%%%%%%%%%%%%%%%%%%%%%%%%%%%%
% Title page
%%%%%%%%%%%%%%%%%%%%%%%%%%%%%%%

\maketitle

\begin{prelims}

\DisplayAbstractInEnglish

\bigskip

\DisplayKeyWords

\medskip

\DisplayMSCclass

\end{prelims}

%%%%%%%%%%%%%%%%%%%%%
% Table of Contents
%%%%%%%%%%%%%%%%%%%%%

\newpage

\setcounter{tocdepth}{1}

\tableofcontents

%%%%%%%%%%%%%%%%%%%%%
% Content begins here
%%%%%%%%%%%%%%%%%%%%%

\section{Introduction}

\emph{The base-field $k$ is algebraically closed and of characteristic $p>0$, unless otherwise stated.} 

Consider a fibration $f \colon X \to T$ between smooth projective varieties. Over characteristic zero ground-fields, the following statement has been known for a while:
\begin{equation}
\label{eq:char_0_statement}
\parbox{400pt}{  $K_{X/T}$ is pseudo-effective whenever one of the following two equivalent conditions has been met for the geometric generic fiber $X_{\overline{\eta}}$:
\begin{enumerate}[leftmargin=45pt]
\item[(Psef)]  
$K_{ X_{\overline{\eta}}}$ is pseudo-effective. 
\item[(N-ur)]  $X_{\overline{\eta}}$ is not uniruled.
\end{enumerate}
} 
\end{equation}
For a precise reference, we refer to  \cite[Theorem~4.1]{Nakayama_Noboru_Zariski_decomposition_and_abundance}, but the statement has been implicit already in the works of Viehweg; see, \textit{e.g.}, \cite{Viehweg_Weak_positivity}. Statements  stating different (semi-)positivity properties of $K_{X/T}$   have been used extensively  in characteristic zero algebraic geometry, for example to questions such as 
\begin{itemize} 
\item subadditivity of Kodaira dimension (here there are particularly many references, with the few initial ones being \cite{Fujita_On_Kahler_fiber_spaces,
Kawamata_Characterization_of_abelian_varieties,
Viehweg_Canonical_divisors_and_the_additivity_of_the_Kodaira_dimension_for_morphisms_of_relative_dimension_one,
Viehweg_Weak_positivity,
Kollar_Subadditivity_of_the_Kodaira_dimension}),
\item construction of moduli spaces of $K$-/KSBA-stable varieties (see, \textit{e.g.}, \cite{Viehweg_Weak_positivity_and_the_stability_of_certain_Hilbert_points,Kollar_Shepher_Barron_Threefolds_and_deformations,
Fujino_Semi_positivity_theorems_for_moduli_problems,
Kovacs_Patakfalvi_Projectivity_of_the_moduli_space_of_stable_log_varieties_and_subadditvity_of_log_Kodaira_dimension,
Codogni_Patakfalvi_Positivity_of_the_CM_line_bundle_for_families_of_K-stable_klt_Fanos,
Xu_Zhuang_On_positivity_of_the_CM_line_bundle_on_K-moduli_spaces}),
\item hyperbolicity questions (see, \textit{e.g.},\cite{Viehweg_Zuo_On_the_Brody_hyperbolicity_of_moduli_spaces_for_canonically_polarized_manifolds,
Caporaso_Harris_Mazur_Uniformity_of_rational_points,  Abramovich_Uniformity_of_stably_integral_points_on_elliptic_curves}), \item non-vanishing conjecture (see, \textit{e.g.}, \cite{Zhang_Abundance_for_3-folds_with_non-trivial_Albanese_maps_in_positive_characteristic}), \item geography of varieties (e.g, \cite{Chen_Chen_Jiang_The_Noether_inequality_for_algebraic_3-folds}), 
\item etc.
\end{itemize}
Motivated in part by the above applications, (semi-)positivity of the relative  canonical bundle has been an active area of research in the past ten years also over  fields $k$ of  characteristic $p>0$; see, \textit{e.g}.,  \cite{Patakfalvi_Semi_positivity_in_positive_characteristics,
Patakfalvi_On_subadditivity_of_Kodaira_dimension_in_positive_characteristic_over_a_general_type_base,
Chen_Zhang_The_subadditivity_of_the_Kodaira-dimension_for_fibrations_of_relative_dimension_one_in_positive_characteristics,
Ejiri_Direct_images_of_pluricanonical_bundles_and_Frobenius_stable_canonical___rings_of_generic_fibers,
Ejiri_Zhang_Iitaka_s_C_n_m_conjecture_for_3-folds_in_positive_characteristic,
Birkar_Chen_Zhang_Iitaka_s_C_n_m_conjecture_for_3-folds_over_finite_fields}. In these results, extra conditions are imposed compared to characteristic zero to exclude wild behavior in positive characteristics. 
By now, it is known that these extra conditions are necessary, as  statement \autoref{eq:char_0_statement} with the assumption (Psef)  is known to fail; see  \cite{Cascini_Ejiri_Kollar_Zhang_Subadditivity_of_Kodaira_dimension_does_not_hold_in_positive___characteristic}. 

Additionally, assumptions (Psef) and (N-ur) of \autoref{eq:char_0_statement} are not equivalent in positive characteristic, and (N-ur) does take into account some of the typical wild behavior; see the introduction of \cite{Patakfalvi_Zdanowicz_Ordinary_varieties_with_trivial_canonical_bundle_are_not_uniruled}. Hence, one could hope that statement \autoref{eq:char_0_statement} with condition (N-ur) still holds in positive characteristic, which is exactly our main theorem. 

\begin{theorem}[Smooth case of \autoref{thm:main}]
\label{thm:main_intro}
Let 
$f \colon X \to T$ be a surjective morphism between smooth projective varieties over $k$ with integral and non-uniruled geometric generic fiber. 
Then $K_{X/T}$ is pseudo-effective. 
\end{theorem}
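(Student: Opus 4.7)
The plan is to reduce, via a suitable finite cover of the base, to the case where $T$ is itself non-uniruled (and even of general type), and then close the argument using earlier positivity machinery for direct images in positive characteristic.

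\textbf{Step 1 (constructing a non-uniruled cover of the base).} For a sufficiently ample line bundle $\sA$ on $T$ and an integer $d$ with $p \nmid d$, I form the cyclic cover $\pi : T' \to T$ of degree $d$ associated to a general section of $\sA^d$. Then $T'$ is smooth projective and
\[
H^i(T',\sO_{T'}) \;=\; \bigoplus_{j=0}^{d-1} H^i(T,\sA^{-j}).
\]
After replacing $\sA$ by a sufficiently high power, Serre vanishing kills $H^i(T,\sA^{-j})$ for $1 \leq j$ and $i < n$, so $\dim H^n(T',\sO_{T'})$ grows polynomially in $d$ while $\dim H^{n-1}(T',\sO_{T'})$ stays bounded. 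The technical heart of the paper, the cohomological uniruledness criterion, then ensures, after a Frobenius-equivariant analysis of the semi-stable part of $H^n(T',\sO_{T'})$, that $T'$ is not uniruled for $d$ large enough. Moreover $K_{T'} = \pi^{*}\!\left(K_T + (d-1)\sA\right)$, so for $d$ big we may further assume $T'$ is of general type.

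\textbf{Step 2 (base change and descent of pseudo-effectivity).} Let $X'$ be a log resolution of $X \times_T T'$, with induced fibration $f' : X' \to T'$ and generically finite $g : X' \to X$. The geometric generic fiber of $f'$ agrees with that of $f$, and is therefore non-uniruled. Since $\pi$ is finite surjective, and the exceptional contributions in $K_{X'/T'} - g^{*} K_{X/T}$ are effective (as handled by the singular versions of the statements alluded to in the abstract), pseudo-effectivity of $K_{X'/T'}$ implies pseudo-effectivity of $K_{X/T}$. Hence it suffices to treat the case where the base is of general type, which I assume henceforth.

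\textbf{Step 3 (conclusion over a general-type base).} With $T$ non-uniruled, any rational curve through a general point of $X$ would either dominate $T$, contradicting non-uniruledness of $T$, or lie in a fiber, contradicting non-uniruledness of the geometric generic fiber; hence $X$ is itself non-uniruled. Combining this with the bigness of $K_T$ and with the (semi-)positivity statements for direct images $f'_{*} \omega_{X'/T'}^{m}$ established in the author's earlier work \cite{Patakfalvi_Semi_positivity_in_positive_characteristics,Patakfalvi_On_subadditivity_of_Kodaira_dimension_in_positive_characteristic_over_a_general_type_base}, one then obtains pseudo-effectivity of $K_{X'/T'}$, which by Step 2 descends to $K_{X/T}$.

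\textbf{Main obstacle.} As emphasized in the abstract, Step 1 is by far the hardest. Proving the cohomological uniruledness criterion requires a Frobenius-equivariant argument showing that uniruledness imposes a sharp upper bound on the semi-stable part of $H^n(T,\sO_T)$ relative to $H^{n-1}(T,\sO_T)$; verifying the criterion for $T'$ then demands a careful analysis of the Frobenius action on the cyclic-cover cohomology decomposition above, which is substantially more delicate than the dimension growth of $H^n(T',\sO_{T'})$. Steps 2 and 3, by contrast, are largely a matter of correctly invoking base change, resolution of singularities in positive characteristic, and earlier positivity results.
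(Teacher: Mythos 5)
Your Step 1 matches the paper's strategy for producing a non-uniruled cover of the base, but Steps 2 and 3 contain genuine gaps that the paper's actual argument is specifically designed to avoid.

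First, in Step 2 you pass to a log resolution of $X \times_T T'$. Resolution of singularities is not available in positive characteristic in arbitrary dimension, and the paper explicitly flags this as one of the two main difficulties. The point of insisting that the cover $T' \to T$ be \emph{finite, flat, and smooth} is precisely that then $X \times_T T'$ is automatically integral (by the geometric integrality of the generic fiber plus flatness) and has local complete intersection singularities (since $f$ is an lci morphism and the new base is regular), so that no resolution is ever needed: bend-and-break in the form of \cite[Thm IV.5.14]{Kollar_Rational_curves_on_algebraic_varieties} applies directly to lci total spaces. Your comparison of $K_{X'/T'}$ with $g^{*}K_{X/T}$ via "effective exceptional contributions" is therefore resting on a construction that does not exist.

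Second, and more seriously, Step 3 does not close the argument. Knowing that $X$ is non-uniruled does not give pseudo-effectivity of $K_X$ or of $K_{X/T}$ in characteristic $p$ --- the non-equivalence of (Psef) and (N-ur) is the entire subtlety here. The direct-image positivity results you invoke require additional hypotheses ($F$-singularity conditions on fibers, or $K_{X/T}$ being $f$-nef) that are not available under the sole assumption that the geometric generic fiber is non-uniruled; indeed the paper points out that $f_{*}\omega_{X/T}$ can fail to be nef even for families of smooth hyperbolic curves, so this route is blocked in principle, not just in the references. The missing idea is the following bend-and-break argument: assume $K_{X/T}$ is not pseudo-effective, use the duality between the pseudo-effective cone and the cone of movable curves to produce an irreducible movable curve $C$ with $K_{X/T}\cdot C < 0$, then base change by iterated Frobenii $T^{n}\to T$ to get $X^{n}$ (still integral, lci, and non-uniruled by the fiber/base dichotomy for rational curves), and compute
\begin{equation*}
K_{X^{n}}\cdot C^{n} \;=\; a_n\left(K_{X/T} + \tfrac{1}{p^{n}}f^{*}K_{T}\right)\cdot C \;<\;0 \quad\text{for } n\gg 0,
\end{equation*}
so that bend-and-break produces rational curves through general points of $X^{n}$, contradicting its non-uniruledness. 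Without this mechanism your proof does not establish the theorem.
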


Note that \autoref{thm:main_intro} implies that the examples of \cite{Cascini_Ejiri_Kollar_Zhang_Subadditivity_of_Kodaira_dimension_does_not_hold_in_positive___characteristic} need to have uniruled geometric generic fiber, and indeed they are (very) singular rational  curves. Also, as indicated in the statement of \autoref{thm:main_intro}, and as will also be the case for \autoref{thm:non_uniruled} and \autoref{cor:cyclic_cover_intro}, we state a singular version in the later parts of the article, allowing as bad singularities as the proof lets us do. This means complete intersection, $W \sO$-rational, and  $W \sO$-rational and Cohen-Macaulay singularities in the three respective cases. 

Our last remark concerning \autoref{thm:main_intro} is that the pseudo-effectivity of $K_{X/T}$ is one of  the weakest possible semi-positivity properties. For example, under the same assumption, $f_* \omega_{X/T}$ is known to be not always semi-positive. In fact, there are examples of  families $f \colon X \to T$ of smooth hyperbolic curves such that $f_* \omega_{X/T}$ is not nef; see \cite{Moret_Bailly_Familles_de_courbes_et_de_varietes_abeliennes_sur_P_1_II_exemples}. Nevertheless, from \autoref{thm:main_intro} it follows that even in this case, at least the weaker property holds that $K_{X/T}$ is pseudo-effective. In fact, a similar phenomenon was known earlier: In \cite{Patakfalvi_Semi_positivity_in_positive_characteristics}, it was shown that $K_{X/T}$ is nef as soon as it is $f$-nef and the fibers have mild singularities. The novelty of \autoref{thm:main_intro} is to weaken these assumptions to the almost most general case, at the price of also weakening the conclusion from being nef to being pseudo-effective. In fact, the only possible further generalization  of \autoref{thm:main_intro} would be to remove the geometrically integral assumption, which we leave as an open question.

As usual for  results as above, \autoref{thm:main_intro} implies the following subadditivity of Kodaira dimension-type result, where we refer to the first paragraph of \autoref{ssec:proof} for the definition of the canonical divisor of $X_{\overline{\eta}}$ and for the fact that $K_{X_{\overline{\eta}}}= K_X|_{X_{\overline{\eta}}}$.

\begin{corollary}
\label{cor:subadditivity}
If\, $f \colon X \to T$ is a surjective morphism between smooth projective varieties over $k$  such that $T$ is of general type and the  geometric generic fiber $X_{\overline{\eta}}$ is integral, non-uniruled with $K_{X_{\overline{\eta}}}$ big, then
\begin{equation*}
\kappa(X) \geq \kappa\left( K_{X_{\overline{\eta}}} \right) + \kappa(T). 
\end{equation*}

\end{corollary}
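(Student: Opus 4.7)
Since $T$ is of general type and $K_{X_{\overline{\eta}}}$ is big, we have $\kappa(T) = \dim T$ and $\kappa(K_{X_{\overline{\eta}}}) = \dim X_{\overline{\eta}} = \dim X - \dim T$, so the asserted inequality is equivalent to $\kappa(X) \geq \dim X$, i.e.\ to the bigness of $K_X$. The bigness of $K_{X_{\overline{\eta}}}$ forces $X_{\overline{\eta}}$ to be non-uniruled, so the hypotheses of \autoref{thm:main_intro} are met and the main theorem yields that $K_{X/T}$ is pseudo-effective on $X$. The plan is to combine this pseudo-effectivity with the bigness of $K_{X/T}|_{X_{\overline{\eta}}} = K_{X_{\overline{\eta}}}$ on the fiber and of $K_T$ on the base, through the decomposition $K_X = K_{X/T} + f^*K_T$.

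The key intermediate statement I need is the following: \emph{if $D$ is a pseudo-effective $\bQ$-Cartier divisor on $X$ with $D|_{X_{\overline{\eta}}}$ big, then $D + f^*A$ is big on $X$ for every sufficiently ample $\bQ$-divisor $A$ on $T$.} Granting this and applying it to $D = K_{X/T}$, one obtains some ample $A$ on $T$ such that $K_{X/T} + f^*A$ is big. Since $K_T$ is big, Kodaira's lemma then produces an integer $n \geq 1$ and an effective $\bQ$-divisor $E$ on $T$ with $nK_T \equiv_{\bQ} A + E$, and combining,
\[
nK_X \;\equiv_{\bQ}\; \bigl(K_{X/T} + f^*A\bigr) \;+\; (n-1)K_{X/T} \;+\; f^*E,
\]
which is a sum of a big divisor, a pseudo-effective divisor (again by \autoref{thm:main_intro}), and an effective divisor, hence big. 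Therefore $K_X$ is big, as needed.

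The main obstacle is the intermediate statement, i.e.\ passing from pseudo-effectivity of $D$ together with bigness of $D|_{X_{\overline{\eta}}}$ to bigness of $D + f^*A$. I would prove it by applying Kodaira's lemma on the generic geometric fiber to write $D|_{X_{\overline{\eta}}} \equiv_{\bQ} \alpha B|_{X_{\overline{\eta}}} + N_{\overline{\eta}}$ for some ample divisor $B$ on $X$, some $\alpha > 0$, and an effective $\bQ$-divisor $N_{\overline{\eta}}$ on $X_{\overline{\eta}}$, and then extending $N_{\overline{\eta}}$ to an effective $\bQ$-divisor $N$ on $X$ by taking closures. The class $V := D - \alpha B - N$ then satisfies $V|_{X_{\overline{\eta}}} \equiv 0$; that is, $V$ is numerically $f$-vertical. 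The technical heart is to show that $V + f^*A$ is pseudo-effective for $A$ sufficiently ample on $T$: components of such a vertical class are either pullbacks of divisors on $T$ (which get absorbed once $A$ is large enough) or components of a reducible fiber over a codimension one point $t \in T$, and each such component can be written, modulo $f^*t$, as a $\bQ$-linear combination of the remaining components of $f^{-1}(t)$, so that negative contributions can be converted into effective ones at the cost of adjusting the $f^*$-summand. Once pseudo-effectivity of $V + f^*A$ is in hand, $D + f^*A = \alpha B + N + (V + f^*A)$ is an ample plus an effective plus a pseudo-effective class, hence big, completing the intermediate statement and with it the corollary.
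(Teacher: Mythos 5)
Your overall strategy (reduce to bigness of $K_X$, then combine the pseudo-effectivity of $K_{X/T}$ from \autoref{thm:main_intro} with the bigness of $K_{X/T}|_{X_{\overline{\eta}}}$ and of $K_T$) is sound, and your intermediate statement is in fact true. The gap is in your proof of that intermediate statement. The class $V = D - \alpha B - N$ only satisfies $V|_{X_{\overline{\eta}}} \equiv 0$; this does \emph{not} mean $V$ is representable by a vertical divisor, so the analysis ``components of such a vertical class are either pullbacks from $T$ or components of reducible fibers'' does not apply to it. Worse, the statement you are actually invoking --- that any class $V$ with $V|_{X_{\overline{\eta}}} \equiv 0$ has $V + f^*A$ pseudo-effective for sufficiently ample $A$ --- is false. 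Take $E$ an elliptic curve, $f = \pr_1 : X = E \times E \to T = E$, and $V = \Delta - E \times \{0\}$: this class has degree $0$ on every fiber, yet $(V + f^*A)^2 = \Delta^2 - 2\,\Delta \cdot (E \times \{0\}) = -2$ for every $A$ pulled back from $T$, and on an abelian surface no class of negative self-intersection is pseudo-effective. The classes that are numerically trivial on the geometric generic fiber form a much larger group than the vertical classes (they contain all horizontal classes of fiber-degree zero), and their negativity cannot be absorbed by $f^*A$.

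The repair is to apply Kodaira's lemma \emph{relatively} rather than fiberwise, which is exactly what the paper does: since $K_{X/T}|_{X_{\overline{\eta}}} = K_{X_{\overline{\eta}}}$ is big, $K_{X/T}$ is $f$-big, and the relative Kodaira lemma yields $K_{X/T} \sim_{\bQ} A + E$ globally on $X$ with $A$ an $f$-ample and $E$ an effective $\bQ$-divisor; this replaces both your fiberwise decomposition and the problematic class $V$. Writing also $K_T = H + G$ with $H$ ample and $G$ effective, one gets $K_X = (\varepsilon A + f^*H) + \big(f^*G + \varepsilon E + (1-\varepsilon)K_{X/T}\big)$, where the first summand is ample for $0 < \varepsilon \ll 1$ and the second is effective plus pseudo-effective by \autoref{thm:main_intro}, so $K_X$ is big. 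Two smaller remarks: your claim that bigness of $K_{X_{\overline{\eta}}}$ forces non-uniruledness is false in characteristic $p$ (there exist uniruled surfaces of general type), though harmless here since non-uniruledness is a hypothesis of \autoref{cor:subadditivity}; and extending $N_{\overline{\eta}}$ from the geometric generic fiber to $X$ ``by closure'' would in any case require a descent step from $\overline{\eta}$ to $\eta$.
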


The hardest in the proof of \autoref{thm:main_intro} is to construct a smooth non-uniruled finite cover of $T$. Being in the situation of $\characteristic p>0$ renders this hard in two aspects: 
\begin{itemize}
\item It is hard to show that a variety is not uniruled.
\item We need to use a construction that gives smoothness directly, as resolution of singularities is not available. 
\end{itemize}
So, next we state the by-product statements we obtained while constructing this non-uniruled cover. The first one is a simple non-uniruledness condition, in terms of coherent cohomology. The author is in fact not aware of earlier such conditions in the literature that work for arbitrary varieties. However, before the statement, we need to recall the notion of Frobenius-semi-stable part. 

If $X$ is a projective variety over $k$, then the absolute Frobenius morphism $F \colon X  \to X$ induces a homomorphism $F^* \colon H^i(X, \sO_X) \to H^i(X, \sO_X)$. This is referred to as the Frobenius action on $H^i(X, \sO_X)$. The \emph{semi-stable} part of $H^i(X, \sO_X)$ with respect to this action can be defined multiple ways. It is  given both by the $\bF_p$-linear subspace where $F^*$ acts by the identity and also by the image of a  high-enough iteration of $F^*$ (see, \textit{e.g.}, \cite[Lemma 3.3]{Chambert_Loir_Cohomologie_cristalline_un_survol}):
\begin{equation*}
H^i(X, \sO_X)^{\sstab}:= \left( H^i(X, \sO_X)^{F^* = \Id} \right) \otimes_{\bF_p} k = \left(F^*\right)^e \left( H^i(X, \sO_X) \right) \quad \textrm{for } e \gg 0.
\end{equation*}

\begin{theorem}[Smooth case of \autoref{prop:Witt_cohom_non_zero}]
\label{thm:non_uniruled}
If for a smooth projective variety $X$ over $k$ of dimension $n>0$, the inequality
\begin{equation*}
\dim_k H^{n-1}(X, \sO_X)^{\sstab} < \dim_k H^n (X, \sO_X)^{\sstab} \end{equation*}
holds, then  $X$ is not uniruled.
\end{theorem}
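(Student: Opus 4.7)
The plan is to prove the contrapositive: assuming $X$ is uniruled, show that $H^n(X,\sO_X)^{\sstab} = 0$, which contradicts the strict inequality $\dim_k H^{n-1}(X,\sO_X)^{\sstab} < \dim_k H^n(X,\sO_X)^{\sstab}$.

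First I would translate the question into Witt vector cohomology. By the Bloch--Illusie Hodge-Witt slope spectral sequence for a smooth projective variety $X$ over $k$, the $k$-dimension of $H^i(X,\sO_X)^{\sstab}$ equals the $K$-dimension of the slope-zero (unit-root) subspace of $H^i(X,W\sO_X)_K$, where $K = W(k)[1/p]$. In fact I will establish the stronger vanishing $H^n(X,W\sO_X)_K = 0$ under the uniruledness hypothesis, which is more than enough.

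Next, I exploit uniruledness. By definition there exist a smooth projective variety $Y$ of dimension $n-1$ and a dominant, generically finite rational map $\phi: Y\times\mathbb{P}^1 \dashrightarrow X$. Taking the graph closure $\overline{\Gamma} \subset Y\times\mathbb{P}^1\times X$ and a de Jong alteration $\pi: Z \to \overline{\Gamma}$ with $Z$ smooth projective and $\pi$ of generic degree prime to $p$ (a tame alteration), one obtains a well-defined pullback $\phi^*: H^n(X,W\sO_X)_K \to H^n(Y\times\mathbb{P}^1,W\sO)_K$ on Witt vector cohomology tensored with $K$. The projection formula $\phi_*\phi^* = \deg(\phi)\cdot\mathrm{id}$, together with the invertibility of $\deg(\phi)\geq 1$ in the characteristic-zero field $K$, implies that $\phi^*$ is split injective. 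The trace-and-pullback formalism on Witt vector cohomology that this relies on is in the spirit of Chatzistamatiou--Rülling. The target is then computed by K\"unneth:
\[
H^n(Y\times\mathbb{P}^1,W\sO)_K = \bigl(H^n(Y,W\sO_Y)\otimes H^0(\mathbb{P}^1,W\sO_{\mathbb{P}^1})\bigr)_K \oplus \bigl(H^{n-1}(Y,W\sO_Y)\otimes H^1(\mathbb{P}^1,W\sO_{\mathbb{P}^1})\bigr)_K = 0,
\]
since $H^n(Y,W\sO_Y)=0$ (as $\dim Y = n-1$) and $H^1(\mathbb{P}^1,W\sO_{\mathbb{P}^1})=0$ (which follows by induction on $m$ from the exact sequences $0 \to F_*W_{m-1}\sO \to W_m\sO \to \sO \to 0$ on $\mathbb{P}^1$, using $H^1(\mathbb{P}^1,\sO)=0$, and then passing to the inverse limit). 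Injectivity of $\phi^*$ into this zero space forces $H^n(X,W\sO_X)_K = 0$, hence $H^n(X,\sO_X)^{\sstab} = 0$, as required.

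The main obstacle is the construction of $\phi^*$ for a rational map in positive characteristic and the verification of the projection formula. Since resolution of singularities is unavailable, one is forced to use a tame de Jong alteration of the graph (arranging a generic degree invertible in $K$) and invoke the trace-and-pullback formalism for Witt vector cohomology of generically finite dominant morphisms between smooth projective varieties, carefully verifying that the projection formula survives the passage through the alteration. This is presumably the technical heart of the actual proof.
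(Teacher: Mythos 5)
Your contrapositive argument has a genuine gap at the step where you pass from $H^n(X,W\sO_X)_K=0$ back to $H^n(X,\sO_X)^{\sstab}=0$. The claimed identity ``$\dim_k H^i(X,\sO_X)^{\sstab}$ equals the $K$-dimension of the slope-zero part of $H^i(X,W\sO_X)_K$'' is false in general: by Artin--Schreier (resp.\ Artin--Schreier--Witt) the left-hand side equals $\dim_{\bF_p}H^i_{\et}(X,\bF_p)$, while the right-hand side equals $\rank_{\bZ_p}H^i_{\et}(X,\bZ_p)$, and these differ by torsion contributions. Concretely, in the exact sequence $H^{n-1}(X,\sO_X)\xrightarrow{B}H^n(X,\sO_X)\xrightarrow{V}H^n(X,W_2\sO_X)$ the Bockstein $B$ may hit semistable classes of $H^n(X,\sO_X)$; such classes are killed by $V$ and never lift to non-torsion classes of $H^n(X,W\sO_X)$, so $H^n(X,\sO_X)^{\sstab}\neq 0$ does \emph{not} imply $H^n(X,W\sO_X)_K\neq 0$, and your vanishing of the latter is not ``more than enough'' for the vanishing of the former. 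This is exactly why the hypothesis of the theorem involves $H^{n-1}$ at all: if your step were valid, the conclusion would follow from the weaker hypothesis $H^n(X,\sO_X)^{\sstab}\neq 0$ alone, with no condition on $H^{n-1}$.

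What you do prove correctly --- uniruled $\Rightarrow H^n(X,W\sO_{X,\bQ})=0$, via correspondences, prime-to-$p$ alterations and the K\"unneth computation --- is precisely the half of the argument that the paper takes as known, citing Esnault and Prop.\ 4.6 of the paper with Zdanowicz (see the addendum to \autoref{prop:Witt_cohom_non_zero}). The actual content of the theorem is the other implication: that the inequality $\dim_k H^{n-1}(X,\sO_X)^{\sstab}<\dim_k H^n(X,\sO_X)^{\sstab}$ forces $H^n(X,W\sO_{X,\bQ})\neq 0$. The paper proves this by showing that the inequality makes $\length_{Wk}H^n(X,W_j\sO_X)^{\sstab}$ grow at least linearly in $j$ (using that $(\_)^{\sstab}$ is exact on the category of finite-length $Wk_{\sigma}$-modules), and then running an induction that produces elements $x_{j_i}\in H^n(X,W_{j_i}\sO_X)^{\sstab}$ with $p^i x_{j_i}\neq 0$, via $p^i=F^iV^iR^i$, the bijectivity of $F$ on semistable parts, and a choice of $j_i$ large enough that the kernel of $V^i$ (controlled by the Bocksteins out of $H^{n-1}(X,W_i\sO_X)^{\sstab}$) cannot swallow the relevant coset. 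To repair your proof you would need to supply this step, or some other device controlling the Bockstein maps; it is not a formal consequence of the slope spectral sequence.
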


The construction of the smooth non-uniruled cover of $T$ is then given by the following corollary of \autoref{thm:non_uniruled}. 

\begin{corollary}[Smooth case of \autoref{thm:non_uniruled_cyclic_cover}]
\label{cor:cyclic_cover_intro}
Let $X$ be a smooth projective  variety over $k$, and let  $\sH$ be an ample line bundle on $X$. Then there exists an integer $s >0$ with the following property: For every integer $ p \nmid d >0$ and for every general $D \in |\sH^{sd}|$, the  corresponding degree $d$ cyclic cover
\begin{equation*}
Y:= \Spec_X
\expl{\left( \bigoplus_{i=0}^{d-1} \sH^{-si} \right)}{the algebra structure is given by $\sH^{-sd} \cong \sO_X(-D) \hookrightarrow \sO_X$}
\end{equation*}
 is not uniruled for $s \gg 0$. 

\end{corollary}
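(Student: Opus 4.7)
The plan is to apply \autoref{thm:non_uniruled} to $Y$. For $s\gg 0$ and general $D\in|\sH^{sd}|$, Bertini gives that $Y$ is smooth and irreducible, and with $n\eqdef\dim X=\dim Y$ it suffices to establish $\dim_k H^{n-1}(Y,\sO_Y)^{\sstab}<\dim_k H^n(Y,\sO_Y)^{\sstab}$. Using affineness of $\pi$ and $\pi_\ast\sO_Y=\bigoplus_{i=0}^{d-1}\sH^{-si}$,
$$H^j(Y,\sO_Y)=\bigoplus_{i=0}^{d-1} H^j(X,\sH^{-si}).$$
Choosing $s$ so large that Serre vanishing yields $H^j(X,\sH^m\otimes\omega_X)=0$ whenever $j>0$ and $m\ge s$, Serre duality kills $H^{n-1}(X,\sH^{-si})$ for $i\ge 1$. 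Only the Frobenius-stable $i=0$ summand survives, giving
$$\dim_k H^{n-1}(Y,\sO_Y)^{\sstab}=\dim_k H^{n-1}(X,\sO_X)^{\sstab}\eqdef c,$$
a constant independent of $d$ and $D$.

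I would then analyse the Frobenius on $H^n(Y,\sO_Y)$. Since $F_Y\circ[\zeta]=[\zeta^p]\circ F_Y$ for every $\zeta\in\mu_d$, $F_Y$ permutes the $\mu_d$-weight summands via $i\mapsto ip\bmod d$; writing $f\in H^0(X,\sH^{sd})$ for the defining section of $D$ and $ip=q_id+r_i$ with $0\le r_i<d$, the block of $F_Y$ on the $i$-th summand is
$$H^n(X,\sH^{-si})\xrightarrow{F_X^\ast}H^n(X,\sH^{-sip})\xrightarrow{\,\cdot f^{q_i}\,}H^n(X,\sH^{-sr_i}).$$
Decomposing $\bZ/d\bZ$ into orbits $O$ of length $\ell_O$ under $i\mapsto ip$ and iterating $\ell_O$ times around each orbit produces a $p^{\ell_O}$-linear self-map $\Psi_O\colon\alpha\mapsto f^{Q_O}\cdot(F_X^\ast)^{\ell_O}(\alpha)$ on $H^n(X,\sH^{-si_O})$, with $Q_O=i_O(p^{\ell_O}-1)/d$; the $O$-block of $H^n(Y,\sO_Y)$ contributes $\ell_O\cdot\dim_k H^n(X,\sH^{-si_O})^{\sstab}_{\Psi_O}$ to the top semistable part. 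Focusing on the orbit $O_1\ni 1$ of length $\ell\eqdef\mathrm{ord}_{(\bZ/d\bZ)^\times}(p)$, and setting $V\eqdef H^n(X,\sH^{-s})$, $V'\eqdef H^n(X,\sH^{-sp^\ell})$, $Q\eqdef(p^\ell-1)/d$, I claim that for $s\gg 0$ and general $f$ the composition
$$\Psi_{O_1}\;=\;(\cdot f^Q)\circ(F_X^\ast)^\ell\colon V\to V'\to V$$
is surjective. Granting this, $\Psi_{O_1}$ is bijective, all of $V$ is $\Psi_{O_1}$-semistable, and the $O_1$-block alone contributes
$$\ell\cdot h^n(X,\sH^{-s})\;=\;\ell\cdot h^0(X,\sH^s\otimes\omega_X)\;=\;\Theta(s^n)\;\to\;\infty\quad\text{as}\quad s\to\infty,$$
eventually beating the constant $c$.

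The main obstacle is the surjectivity claim. Since $\cdot f^Q\colon V'\to V$ is already surjective (dually, $\cdot f^Q$ is injective on $H^0$), $\Psi_{O_1}$ is surjective if and only if (i) $(F_X^\ast)^\ell\colon V\to V'$ is injective and (ii) $(F_X^\ast)^\ell(V)\cap\ker(\cdot f^Q)=\{0\}$ inside $V'$. For (i), Grothendieck--Serre duality identifies the rank of $(F_X^\ast)^\ell$ with that of the iterated Cartier operator $C^\ell\colon H^0(X,\omega_X\otimes\sH^{sp^\ell})\to H^0(X,\omega_X\otimes\sH^s)$, and surjectivity of $C^\ell$ for $s\gg 0$ follows from Serre vanishing applied to the (appropriately twisted) kernel sheaves $B^n_X$ arising in the $\ell$-fold factorisation of $C^\ell$. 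For (ii), the dimension count
$$\dim(F_X^\ast)^\ell(V)+\dim\ker(\cdot f^Q)\;=\;h^0(\omega_X\otimes\sH^s)+\bigl(h^0(\omega_X\otimes\sH^{sp^\ell})-h^0(\omega_X\otimes\sH^s)\bigr)\;=\;\dim V'$$
shows that the two subspaces have complementary dimensions, so their generic intersection is trivial; the required genericity in $f\in|\sH^{sd}|$ will be proved via an incidence-variety / Bertini argument, using the very-ampleness of $\sH^{sd}$ for $s\gg 0$ to ensure that the failure locus in $|\sH^{sd}|$ is a proper closed subset.
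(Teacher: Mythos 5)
Your reduction to \autoref{thm:non_uniruled}, the computation $\dim_k H^{n-1}(Y,\sO_Y)^{\sstab}=\dim_k H^{n-1}(X,\sO_X)^{\sstab}$ via Serre duality and Serre vanishing, and the orbit-by-orbit analysis of the twisted Frobenius on $\bigoplus_i H^n(X,\sH^{-si})$ all match the paper's proof of \autoref{thm:non_uniruled_cyclic_cover}. The divergence, and the gap, is in how you make the $O_1$-block large. You claim that $\Psi_{O_1}=(\cdot f^Q)\circ(F_X^\ast)^\ell$ is \emph{bijective} on $V=H^n(X,\sH^{-s})$ for general $f$, and you justify the crucial transversality $(F_X^\ast)^\ell(V)\cap\ker(\cdot f^Q)=0$ by the remark that the two subspaces of $V'$ have complementary dimensions, ``so their generic intersection is trivial,'' deferring the genericity to an unspecified incidence-variety argument. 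This is a non sequitur: $\ker(\cdot f^Q)$ is not a generic point of the Grassmannian but a very constrained family of subspaces, and complementary dimension gives no transversality for such a family against a \emph{fixed} subspace. To run the incidence argument you would need, for every $0\neq v\in(F_X^\ast)^\ell(V)$, that $\{f: f^Qv=0\}$ has codimension at least $\dim V$ in $H^0(X,\sH^{sd})$ --- i.e.\ that the $\dim V$ degree-$Q$ equations defining it are independent --- and nothing in the proposal addresses this. Since the condition is open in $f$ (the kernels form a flat family), the honest route is to exhibit a single good $f$; but note that your bijectivity claim is, already for curves, a generic-ordinarity statement for cyclic covers (attainment of the maximal $p$-rank permitted by the eigenspace dimensions), which is a deep theorem of Bouw for covers of $\bP^1$ and is not available in this generality by soft arguments.

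The paper avoids this entirely by proving only what is needed: \autoref{prop:non_zero_semi_stable_part} shows that for $s\gg 0$ the semistable part of the twisted action on $H^n(X,\sH^{-s})$ has dimension at least any prescribed $l$ --- not that the action is bijective --- and it does so by explicitly constructing one divisor $D=d\Gamma$ adapted to $l$ chosen smooth points, using the local formula for the Frobenius trace (\autoref{rem:Frobenius_trace_locally}) to verify that the iterates keep $l$ ``delta-function'' sections independent. Passing from this one $D$ to a general $D$ is itself not formal: the stable image does not commute naively with specialization, and the paper devotes \autoref{lem:stabilization_genericity} (relative Frobenii, local freeness of $R^nf_\ast L^{-1}$, and the stabilization $\im\psi^{2t}=\im\psi^t$) to this semicontinuity. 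Your steps (i) and the dimension count are fine, but as written step (ii) is asserted rather than proved, and the statement it asserts is substantially stronger than what the paper establishes or needs.
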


Lastly, we mention a direct consequence of \autoref{thm:non_uniruled} to mixed characteristic. The starting point is the fact that uniruledness is not a constructible property in mixed-characteristic families. In fact, \cite[Theorem IV.1.8.1]{Kollar_Rational_curves_on_algebraic_varieties} states that the locus in flat families where fibers are uniruled is a countable union of closed subvarieties. And, over mixed-characteristic bases that are of finite type over $\bZ$, this is the best on can say. For example, consider $X:=V(x^4+y^4 + z^4 +v^4 =0) \subseteq \bP^3_{\bZ[1/2]}$. By \cite[Theorem III]{Shioda_Katsura_On_Fermat_varieties}, $X_p$ is unirational and hence also uniruled whenever $p \equiv -1 (4)$. On the other hand,  if $p \equiv 1 (4)$, then $X_p$ is globally $F$-split, as the coefficient of $xyzv^{p-1}$ in $(x^4 + y^4 + z^4 + v^4)^{p-1}$ is non-zero. Equivalently, $X_p$ is weakly ordinary, and then for example \cite[Theorem 1.1]{Patakfalvi_Zdanowicz_Ordinary_varieties_with_trivial_canonical_bundle_are_not_uniruled}
implies that $X_p$ in this case is not uniruled. Hence, in this case both the uniruled and the non-uniruled loci are infinite, and they both have density $\frac{1}{2}$. 

Summarizing, the above example shows that the constructibility of neither the uniruled nor the non-uniruled locus holds. And in fact, both loci can be not only infinite, but even of high density. On the other hand, the author is not aware of results pertaining to general varieties claiming that these infinite behaviors are not only possible, but they happen whenever certain criteria are satisfied. This seems to be an extremely hard problem, especially if one would also say something about densities. Nevertheless, \autoref{thm:non_uniruled} implies a statement of this type assuming the weak ordinarity conjecture. 

The \emph{weak ordinarity conjecture}, see \cite[Conjecture~1.1]{Mustata_Srinivas_Ordinary_varieties_and}, states that if $X_S \to S$ is a smooth, projective family over an integral, mixed-characteristic base of finite type over $\Spec (\bZ)$ with generic point $\eta$, then the set 
\begin{equation*}
\left\{  \  s \in S \textrm{ a closed point } \left| \ \dim_{k(s)} H^i\left(X_s, \sO_{X_s}\right)^{\sstab} =  \dim_{k(\eta)} H^i\left( X_{\eta}, \sO_{X_{\eta}} \right) \  \right. \right\}
\end{equation*}
is dense.

\begin{corollary}
Let $X$ be a smooth projective variety over a field $k_0$ of characteristic zero such that $\dim_{k_0} H^{\dim X}(X, \sO_X)> \dim_{k_0} H^{\dim X -1} ( X, \sO_X)$, and let $X_S \to S$ be a model of $X$ over an integral, mixed-characteristic base of finite type over $\Spec (\bZ)$. 
Then, under the weakly ordinarity conjecture, the following set is dense in $S$:
\begin{equation*}
\left\{  \  s \in S \textrm{ a closed point } \left| \ X_s \textrm{ is not uniruled} \  \right. \right\}.
\end{equation*}

\end{corollary}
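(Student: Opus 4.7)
Write $n = \dim X$. The idea is to combine Grothendieck's upper semicontinuity of cohomology with Theorem~\ref{thm:non_uniruled}, invoking the weakly ordinarity conjecture only in degree $n$. This asymmetric use of inputs is essential: the conjecture provides an \emph{equality} for the dimension of the semistable part, whereas in degree $n-1$ only an \emph{upper bound} is required, and that is precisely what ordinary semicontinuity produces.

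First, because $k(\eta) \hookrightarrow k_0$ is a flat extension of fields of characteristic zero, base change yields $\dim_{k(\eta)} H^i(X_\eta,\sO_{X_\eta}) = \dim_{k_0} H^i(X,\sO_X)$ for every $i$. Since $X_S \to S$ is proper and flat, upper semicontinuity provides an open dense $V \subset S$ on which $\dim_{k(s)} H^{n-1}(X_s,\sO_{X_s}) = \dim_{k_0} H^{n-1}(X,\sO_X)$, and hence $\dim_{k(s)} H^{n-1}(X_s,\sO_{X_s})^{\sstab} \leq \dim_{k_0} H^{n-1}(X,\sO_X)$ for all such $s$. Applying the weakly ordinarity conjecture to $X_S \to S$ in degree $n$ then furnishes a dense set $D_n$ of closed points of $S$ satisfying $\dim_{k(s)} H^n(X_s,\sO_{X_s})^{\sstab} = \dim_{k_0} H^n(X,\sO_X)$. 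The intersection $D_n \cap V$ is still dense in $S$, because any non-empty open $U \subset S$ meets the open dense $V$ in a non-empty open, which in turn must meet the dense set $D_n$.

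Combining these estimates with the hypothesis $\dim_{k_0} H^n(X,\sO_X) > \dim_{k_0} H^{n-1}(X,\sO_X)$, every closed $s \in D_n \cap V$ satisfies
\begin{equation*}
\dim_{k(s)} H^{n-1}(X_s,\sO_{X_s})^{\sstab} \leq \dim_{k_0} H^{n-1}(X,\sO_X) < \dim_{k_0} H^n(X,\sO_X) = \dim_{k(s)} H^n(X_s,\sO_{X_s})^{\sstab},
\end{equation*}
so Theorem~\ref{thm:non_uniruled} implies that $X_s$ is not uniruled. The non-uniruled closed-point locus therefore contains the dense set $D_n \cap V$, and is in particular dense in $S$. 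The only substantive ingredient is the weak ordinarity conjecture in a single degree; the main conceptual point — and the only conceivable obstacle — is recognising that an upper bound, not an equality, is what is needed in degree $n-1$, so that the conjecture need not be invoked simultaneously for two indices (which in general would not produce a dense intersection).
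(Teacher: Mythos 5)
Your argument is correct and is precisely the direct deduction the paper leaves implicit (no proof is printed for this corollary): semicontinuity gives the needed \emph{upper} bound in degree $n-1$ on a dense open, the conjecture gives the \emph{equality} in degree $n$ on a dense set, and \autoref{thm:non_uniruled} finishes. The only step worth making explicit is that one first shrinks $S$ so that $X_S \to S$ is smooth and projective (spreading out), which is needed both to invoke the weak ordinarity conjecture and to apply \autoref{thm:non_uniruled} to the fibers $X_s$, and which does not affect density.
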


\subsection{The structure of the article and the outline of the proof}

The main idea of the proof of \autoref{thm:main_intro}, or rather of its singular version \autoref{thm:main}, is relatively straightforward; the proof is given in \autoref{sec:proof}. We use a bend-and-break argument together with a base-change to a non-uniruled cover of the base, a few iterated Frobenius base-changes,  and the fact that by now it is known in any characteristic that the pseudo-effective cone is the dual of the cone of movable curves. We refer to the proof of \autoref{thm:main} for the details, and here we only explain the main technical obstacle, which leads to the majority of the work done in the article: It is essential  that during the above-mentioned base-changes, the total space of the fibration stays integral and its singularities stay local complete intersections, so that bend-and-break applies. The only way we are able to guarantee this is if the base-changes are induced by finite flat covers of the base by smooth varieties. 

Hence, the majority of article, that is, \autoref{sec:non_uniruled_cover}, is about showing the existence of a finite flat smooth non-uniruled cover for any smooth projective variety $X$ of dimension $n$. This is done by first showing in \autoref{prop:Witt_cohom_non_zero},  which can be found in \autoref{sec:Witt_non_vanishing},   that if  the inequality 
\begin{equation}
\label{eq:outline_sstab}
H^{n-1}\left(X,\sO_X\right)^{\sstab}< H^n\left(X, \sO_X\right)^{\sstab}
\end{equation}
holds, then $H^n\left(X, W \sO_{X, \bQ}\right)\neq 0$. 
It has been known that this non-vanishing implies non-uniruledness in the case of $W \sO$-rational singularities; see \cite{Esnault_Varieties_over_a_finite_field_with_trivial_Chow_group_of_0_cycles_have_a_rational_point} and  
\cite[Proposition 4.6]{Patakfalvi_Zdanowicz_Ordinary_varieties_with_trivial_canonical_bundle_are_not_uniruled}. 
So, let us focus on how one shows the non-vanishing. By the definition of $H^n(X, W \sO_{X, \bQ})$, it is equivalent to finding an element $x \in H^n(X, W \sO_{X})$ such that $p^i x \neq 0$ for every integer $i>0$.
The main idea to achieve this is to show that inequality \autoref{eq:outline_sstab} implies that the length of the semi-stable part $H^n(X,W_i \sO_X)^{\sstab}$ is a strictly monotone function of $i$. Based on this realization, we show in the proof of \autoref{prop:Witt_cohom_non_zero} that there exist another strictly monotone sequence $j_i$ and compatible elements $x_{j_i} \in H^n\left(X,W_{j_i} \sO_X\right)^{\sstab}$ such that $p^i x_{j_i} \neq 0$. The key here is to realize that $p^i = V^i F^i$, so as $i \cdot \dim H^{n-1}(X, \sO_X)$ gives an upper bound for the kernel of $V^i$ and as $F$ acts by bijection on  $H^n\left(X,W_{j_i} \sO_X\right)^{\sstab}$, we have plenty of elements $y \in H^n\left(X,W_{j_i} \sO_X\right)^{\sstab}$ with $p^i y \neq 0$ as soon as we choose $j_i$ to be large enough. We refer for the finer details to the proof of \autoref{prop:Witt_cohom_non_zero}. Instead, we give two more general remarks:
\begin{enumerate}
\item We think it is essential that  in the above argument, we allow $j_i$ to be much larger than $i$, that is, one cannot  always find an $x=(x_i) \in H^n(X, W \sO_X) = \varprojlim H^n (X, W_i \sO_X)$ such that $p^i x_i \neq 0$. Unfortunately, giving a precise example to such behavior is very hard, as it needs an example of a variety with Bockstein operators that  either are injective or at least have very small kernel. So, we leave this as an open question. Nevertheless, we cannot exclude the existence of such varieties, and hence we are forced to allow $j_i$ to be much larger than $i$ in the proof of \autoref{prop:Witt_cohom_non_zero}.
\item The above argument needs a setup of some category into which $H^{j}(X, W_i \sO_X)$ fits,  which takes into account the Frobenius actions and hence using which one can talk about semi-stable submodules. This is a situation that resembles that of $F$-crystals, but instead of free $W(k)$ modules, we consider finite-length $W(k)$-modules. As we did not find a reference for this setting, we worked out the details in \autoref{sec:category}.
\end{enumerate}
Finally, we have to show that a cyclic cover $Y$ of $X$ as in \autoref{cor:cyclic_cover_intro} (or as in the singular version in \autoref{thm:non_uniruled_cyclic_cover}) satisfies condition \autoref{eq:outline_sstab}. As for such covers $H^{n-1}(Y, \sO_Y)$ is bounded, this boils down to showing that $\dim_k H^n(Y, \sO_Y)^{\sstab}$ grows indefinitely for a general $D$. This then boils down to showing that the semi-stable subspace $H^n(X, \sH^{-s})^{\sstab, D}$ with respect to the following Frobenius  action for general $D$ grows indefinitely as we increase $s$ (here $e >0$ is an integer such that $d | p^e -1$):
\begin{equation*}
\xymatrix{
\sH^{-s} \ar[r] & \sH^{-s} \otimes F^{e,*} \sO_X \cong F^e_* \sH^{-sp^e} \ar[rrr]^(0.6){\cdot \frac{p^e-1}{d}D} & & &  F^e_* \sH^{-s}. 
}
\end{equation*}
It is not hard to show this for a specific choice of $D$ as soon as $\sH^s$ is ample enough, using the local description of the Frobenius trace; see the proof of \autoref{thm:non_uniruled_cyclic_cover} in \autoref{sec:non_vanishing_Frob_action}. Then, we show  in \autoref{prop:non_zero_semi_stable_part}, which can be found in \autoref{sec:deformation_Frobenius_action}, that the statement deforms to the Frobenius action given by a general $D$.

\subsection*{Acknowledgements}

The author is particularly grateful to Maciej Zdanowicz, from whom he learned a big part of the Witt-cohomology techniques used in \autoref{sec:non_uniruled_cover} while working on their joint paper \cite{Patakfalvi_Zdanowicz_Ordinary_varieties_with_trivial_canonical_bundle_are_not_uniruled}, and with whom he had multiple extremely useful conversations about the article.
The author would like to thank in general the members of his group who were present at the group meetings when discussing the paper.

\section{Notation}

We fix an algebraically closed \emph{base-field} $k$ of characteristic $p>0$. In the present article, \emph{variety} means a quasi-projective, integral scheme over $k$. 

We use Witt-sheaves and Witt-cohomology in \autoref{sec:category} and \autoref{sec:Witt_non_vanishing}. For the notation concerning this, we refer to \cite[Section 2.5]{Gongyo_Nakamura_Tanaka_Rational_points_on_log_Fano_threefolds_over_a_finite_field}.

Throughout the article, the \emph{generic fiber} of a morphism $f \colon X\to T$ to an integral scheme is the fiber $X_\eta$ over the generic point $\eta = \Spec( K(T))$ of $T$. This is not to be confused with the general fiber of $f$, by which we mean the closed fibers over a non-empty open set of the base, assuming that $T$ is of finite type over $k$. In fact, the generic fiber typically behaves quite differently than the general fiber. On the other hand, the geometric generic fiber $X_{\overline{\eta}}$ usually has a singularity behavior similar to that of the general fiber (see, \textit{e.g.}, \cite[Proposition 2.1]{Patakfalvi_Waldron_Singularities_of_General_Fibers_and_the_LMMP}).

\subsection{Local complete intersection singularities}
\label{sec:local_compl_int}
We use the definition of \cite{stacks-project} for local complete intersection singularities. The definition of when a morphism $f \colon X \to Y$ is local complete intersection is given in \cite[Definition~\href{https://stacks.math.columbia.edu/tag/069F}{069F}]{stacks-project}, using Koszul-regularity. However, for locally Noetherian schemes, this agrees with the usual definition that it factors as 
\begin{equation*}
\xymatrix{
X \ar@/^1.5pc/[rr]^f \ar[r]_g & Z \ar[r]^h & Y\rlap{,}
}
\end{equation*}
where $h$ is smooth and $g$ is a closed embedding defined locally by a regular sequence; see 
\cite[Lemma~\href{https://stacks.math.columbia.edu/tag/063L}{063L} and Definitions~\href{https://stacks.math.columbia.edu/tag/063J}{063J}, \href{https://stacks.math.columbia.edu/tag/063D}{063D}, and \href{https://stacks.math.columbia.edu/tag/00LF}{00LF}]{stacks-project}.

If we specialize the above definition to $Y = \Spec (k)$, then we obtain the notion of local complete intersection singularities. That is, $X$ has local complete intersection singularities if locally around each $x \in X$, $X$ is a closed subscheme of a smooth variety $Z$, where the ideal of the embedding is generated by a regular sequence $f_i \in \sO_{X,x}$ $(i=1,\dots,r)$.  Equivalently, one can require that $\dim_x X = \dim_x Z -r$; see \cite[Theorem 17.4]{Matsumura_Commuatative_ring_theory}.\looseness=-1

\begin{proposition}
\label{lem:complete_intersection_singularities}\leavevmode
\begin{enumerate}
\item \label{itm:complete_intersection_singularities:relative_absolute} If $f \colon X \to Y$ is a complete intersection morphism and $Y$ has complete intersection singularities, then $X$ also has  complete intersection singularities.
\item \label{itm:complete_intersection_singularities:regular_over_regular} If $f \colon X \to Y$ is a morphism of finite type between regular Noetherian schemes, then $f$ is a complete intersection morphism.
\end{enumerate}
\end{proposition}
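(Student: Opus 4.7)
The plan is to work locally on $X$ around a chosen closed point $x$ in each part, exhibiting a smooth ambient variety into which $X$ embeds as the zero locus of a regular sequence.

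For part (\ref{itm:complete_intersection_singularities:relative_absolute}), I would combine the two given embeddings. Setting $y = f(x)$, locally $f$ factors as $X \xrightarrow{g} Z \xrightarrow{h} Y$ with $h$ smooth and $g$ a closed embedding cut out by a regular sequence $f_1, \dots, f_r \in \sO_{Z,x}$; and, after shrinking, $Y$ sits inside a smooth variety $W$ as a closed subscheme cut out locally by a regular sequence $g_1, \dots, g_s \in \sO_{W,y}$. The key step will be to lift $h$ to a smooth morphism $h' \colon Z' \to W$ with $h'^{-1}(Y) \cong Z$; once this is in hand, $Z'$ is smooth over $k$ as a composition of smooth morphisms. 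I would then choose lifts $\tilde f_1, \dots, \tilde f_r \in \sO_{Z', x}$ of the $f_i$'s and argue that $X \hookrightarrow Z'$ is cut out by the concatenated sequence $(g_1, \dots, g_s, \tilde f_1, \dots, \tilde f_r)$. Checking that this is a regular sequence would proceed in two stages: the $g_j$'s form a regular sequence in $\sO_{Z',x}$ by flatness of $h'$, and modulo these one recovers $\sO_{Z,x}$, in which the $\tilde f_i$'s restrict to the regular sequence $f_1, \dots, f_r$ by assumption.

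For part (\ref{itm:complete_intersection_singularities:regular_over_regular}) the argument is more direct. Any finite-type morphism factors Zariski-locally as $X \hookrightarrow \bA^N_Y \to Y$, and $\bA^N_Y$ is regular since $Y$ is and the projection is smooth. It then suffices to invoke the classical commutative algebra fact that a closed embedding $X \hookrightarrow Z$ between regular Noetherian schemes is locally cut out by a regular sequence: if $R$ is a regular local ring and $R/I$ is also regular, then $I$ is generated by part of a regular system of parameters of $R$, which is automatically a regular sequence.

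The hard part will be the local lifting in part (\ref{itm:complete_intersection_singularities:relative_absolute}), namely extending the smooth morphism $Z \to Y$ to a smooth morphism $Z' \to W$ restricting to $Z$ over $Y$. This is not quite the standard formal smoothness lifting, which handles only nilpotent thickenings, so one must argue on affine charts: present $Z$ étale-locally as a standard smooth $Y$-algebra $\sO_Y[t_1, \dots, t_N] / (p_1, \dots, p_{N-n})$ with non-vanishing Jacobian minor, lift the polynomials $p_j$ coefficient-by-coefficient to $\sO_W[t_1, \dots, t_N]$, and shrink $W$ around $y$ to preserve non-vanishing of the Jacobian. Equivalently, one can quote the corresponding local structure results for smooth morphisms from the Stacks Project.
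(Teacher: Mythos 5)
Your proposal is correct. Part (2) is the paper's proof verbatim: factor $f$ locally as $X \hookrightarrow \bA^N_Y \to Y$ and invoke the fact that a closed immersion between regular Noetherian schemes is cut out locally by a regular sequence. For part (1) the paper gives no argument at all, simply citing the Stacks Project lemma that a composition of local complete intersection morphisms is lci (applied to $X \to Y \to \Spec k$); your argument --- lift the smooth chart $Z \to Y$ to a smooth $Z' \to W$ over the ambient smooth variety, pull back the regular sequence cutting out $Y$ by flatness, and concatenate with lifts of the $f_i$ --- is precisely the standard proof of that cited lemma, so the mathematical content is the same.
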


\begin{proof}
\eqref{itm:complete_intersection_singularities:relative_absolute}~
This is shown in \cite[Lemma~\href{https://stacks.math.columbia.edu/tag/069J}{069J}]{stacks-project}.

\eqref{itm:complete_intersection_singularities:regular_over_regular}~
  By the finite-type assumption, locally $f$ can be factored as 
\begin{equation*}
\xymatrix{
X \ar@/^1.5pc/[rr]^f \ar[r]_g & \bA^n_Y \ar[r]^h & Y\rlap{,}
}
\end{equation*}
where $g$ is a closed embedding. Applying \cite[Lemma~\href{https://stacks.math.columbia.edu/tag/0E9J}{0E9J}]{stacks-project} yields that $g$ locally is given by an ideal generated by a regular sequence. 
\end{proof}

We also note that complete intersection singularities are Cohen--Macaulay. So, if  $X$ has complete intersection singularities, then $X$ being reduced is decided at the generic points of $X$.

\section{A flat non-uniruled smooth cover}
\label{sec:non_uniruled_cover}

\subsection{Category of \texorpdfstring{$\boldsymbol{W(k)_{\sigma}}$}{W(k)\_sigma}-modules of finite \texorpdfstring{$\boldsymbol{W(k)}$}{W(k)}-length.}
\label{sec:category}

For the convenience of the reader, we present this material, which might be well known to the experts. The reason is that we need an $F$-module-like theory that applies  to cohomology groups of the form $H^i\left(X, W_j \sO_X\right)$, where $X$ is projective.

Before we state the definition of our main objects, let us also explain  two notational decisions:
\begin{itemize}
\item As usual in the theory of $F$-crystals, we also denote the Frobenius morphism on $W(k)$ by $\sigma \colon W(k) \to W(k)$ to avoid mixing it up with the Frobenius action on our modules. 

\item To avoid the clash of notation with the theory of $F$-crystals, which concerns free $W(k)$-modules, as explained above, we call our modules $W(k)_{\sigma}$-modules. 
\end{itemize}
Just for the definition of $W(k)_{\sigma}$-modules, we do not need to restrict to the case of finite $\length_{W(k)}$. We will impose this additional condition only later where it is necessary.

\begin{definition}
\label{def:Wk_sigma_module}
A \emph{$W(k)_{\sigma}$-module} is a pair $(M,F)$ such that  $M$ is a $W(k)$-module and $F \colon M \to M$ is an additive homomorphism such that
\begin{equation}
\label{eq:Wk_sigma_module}
 \forall m \in M, \forall r \in W(k) \ : \ F(r m) = \sigma(r)  F(m).
\end{equation}
A \emph{$W(k)_{\sigma}$-submodule} of a $W(k)_{\sigma}$-module $M$ is a $W(k)$-submodule $N \subseteq M$ such that $F(N) \subseteq N$.

\end{definition}

\begin{lemma}
\label{lem:sigma_invertible}
The action of $\sigma$ is invertible on $W(k) $.
\end{lemma}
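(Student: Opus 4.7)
The plan is to unpack the definition of $\sigma$ on $Wk$ and use the fact that $k$, being algebraically closed, is perfect. Concretely, I would recall that elements of $Wk$ are written as sequences $(a_0, a_1, a_2, \dots)$ with $a_i \in k$, and that the ring-theoretic Frobenius $\sigma \colon Wk \to Wk$ acts componentwise by the $p$-th power map: $\sigma(a_0, a_1, a_2, \dots) = (a_0^p, a_1^p, a_2^p, \dots)$. This is the standard description of the Frobenius lift on the Witt vectors of a perfect $\bF_p$-algebra.

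Since $k$ is algebraically closed, it is in particular perfect, so the Frobenius $k \to k$, $a \mapsto a^p$, is a bijection; its inverse sends $a$ to its unique $p$-th root $a^{1/p}$. Applying this componentwise defines a set-theoretic two-sided inverse to $\sigma$ on $Wk$, namely $(a_0, a_1, a_2, \dots) \mapsto (a_0^{1/p}, a_1^{1/p}, a_2^{1/p}, \dots)$.

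Finally, to conclude that $\sigma$ is invertible as a ring homomorphism (and in particular as an additive map, which is all that is needed to invoke \autoref{def:Wk_sigma_module}), I would note that $\sigma$ is itself a ring homomorphism, so its set-theoretic inverse is automatically a ring homomorphism as well. There is no real obstacle here; the entire content is the perfectness of $k$, and the only thing to be careful about is to cite the correct description of $\sigma$ on Witt vectors in the perfect case, for which one can refer to the standard reference in \cite{stacks-project} or to \cite[Section 2.5]{Gongyo_Nakamura_Tanaka_Rational_points_on_log_Fano_threefolds_over_a_finite_field} as already cited in the Notation section.
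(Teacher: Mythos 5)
Your proof is correct and follows essentially the same route as the paper: both recall the componentwise description $\sigma(a_0,a_1,\dots)=(a_0^p,a_1^p,\dots)$ of the Witt-vector Frobenius over a perfect field and conclude bijectivity from the perfectness of $k$.
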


\begin{proof}
Using the presentation of \cite[Section II.6]{Serre_Local_fields} or \cite[Section 2.5]{Gongyo_Nakamura_Tanaka_Rational_points_on_log_Fano_threefolds_over_a_finite_field}, the elements of $W(k)$ can be thought of as vectors $(a_0, a_1, \dots) \in k^{ \bN}$, and the map $\sigma$ is given by $(a_0, a_1, \dots) \mapsto \left(a_0^p, a_1^p,\dots\right)$. As $k$ is perfect, this is bijective.
\end{proof}

\begin{notation}
\label{notation:Witt_cohomologies}
Let $X$ be a projective variety over $k$ of dimension $n$. 
We have ring homomorphisms  
\begin{equation*}
R \colon W_{j+1}\sO_X \to W_j \sO_X,\quad 
 V \colon  W_j \sO_X \to W_{j+1} \sO_X,\quad
p \colon W_j \sO_X \to W_j \sO_X,\quad\text{and}\quad
F \colon W_j \sO_X \to W_j \sO_X.
\end{equation*} 
Using the notation of \cite[Section II.6]{Serre_Local_fields} or \cite[Section 2.5]{Gongyo_Nakamura_Tanaka_Rational_points_on_log_Fano_threefolds_over_a_finite_field}, these homomorphisms are given by
\begin{equation*}
R(a_0,\dots,a_j) = (a_0,\dots, a_{j-1}), \quad V(a_0,\dots, a_{j-1}) = (0,a_0,\dots, a_{j-1}),
\end{equation*}
\begin{equation*}
 F(a_0,\dots, a_{j-1})  = \left(a_0^p,\dots, a_{j-1}^p \right)
  , \quad \textrm{and} \quad
 p (a_0,\dots,a_{j-1}) = \left(0,a_0^p, \dots, a_{j-2}^p\right).
\end{equation*}
In particular, we have the relations
\begin{equation*}
p = VFR, \quad
F  V =  V  F, \quad p  F = F  p, \quad \textrm{ and }  \quad V  p = p V.
\end{equation*}
This then induces homomorphisms and also the respective relation after applying $H^i( X, \_ )$. By abuse of notation, we also denote  these induced homomorphisms by $R$, $V$, $p$, and $F$.
\end{notation}

\begin{remark}
\label{rem:Witt_cohomology_identities}
Using \autoref{notation:Witt_cohomologies}, the following properties will be important for us. Apart from the first one, these properties hold first on the ring level, and then consequently also after applying $H^i(X, \_)$ by functoriality. 

So, the properties are as follows, where $r \in W(k)$ and $m \in H^i\left(W_j \sO_X\right)$ are arbitrary:
\begin{enumerate}
\item $F$ and $R$ are ring homomorphism before applying $H^i(X, \_)$.
\item $F$, $V$, $R$, and $p$ are additive both before and after applying $H^i(X, \_)$.
\item $F(r \cdot m) = \sigma(r) \cdot m$.
\item $V(r \cdot m) = \sigma^{-1}(r) \cdot V(m)$, where  $\sigma \colon W(k) \to W(k)$ is bijective by \autoref{lem:sigma_invertible}. 
\item $p ( r \cdot m) = r \cdot (p m)$.
\end{enumerate}
\end{remark}

\begin{lemma}
\label{lem:Witt_coho_sigma_Wk_module}
If\, $X$ is a projective variety over $k$ and $i \geq 0$ and $j \geq 1$ are integers, then $H^i \left(X, W_j \sO_X\right)$ is a $W(k)_{\sigma}$-module  with $F \colon  H^i \left(X, W_j \sO_X\right) \to H^i \left(X, W_j \sO_X\right)$ being the structure homomorphism. 
\end{lemma}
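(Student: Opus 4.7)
The plan is a direct verification of the three defining properties of a $Wk_\sigma$-module from \autoref{def:Wk_sigma_module}, invoking the structural facts already recorded in \autoref{notation:Witt_cohomologies} and \autoref{rem:Witt_cohomology_identities}.

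First, I would establish the $Wk$-module structure on $H^i(X,W_j\sO_X)$. The structure morphism $X \to \Spec k$ induces, by functoriality of the Witt sheaf construction, a ring homomorphism $W_j k \to H^0(X, W_j \sO_X)$, equivalently a map of sheaves of rings $\underline{W_j k}_X \to W_j\sO_X$. This makes each $H^i(X, W_j\sO_X)$ a $W_j k$-module, and composing with the surjection $Wk \twoheadrightarrow W_jk$ (obtained by quotienting by $p^j$) equips $H^i(X, W_j\sO_X)$ with a $Wk$-module structure. Additivity of $F$ on $H^i(X, W_j\sO_X)$ is then immediate from additivity of $F : W_j\sO_X \to W_j\sO_X$ on the sheaf level, noted in \autoref{rem:Witt_cohomology_identities}(2), together with functoriality of $H^i(X,\_)$.

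The substantive point is the semilinearity relation \autoref{eq:Wk_sigma_module}, namely $F(r\cdot m) = \sigma(r)\cdot F(m)$ for $r \in Wk$ and $m \in H^i(X, W_j\sO_X)$. I would first verify the analogous identity on the sheaf level. Since $F : W_j\sO_X \to W_j\sO_X$ is a ring homomorphism (not merely additive), it is in particular multiplicative against the image of the structure map $\underline{W_jk}_X \hookrightarrow W_j\sO_X$; and by construction $F$ restricts on this subring to the Frobenius $\sigma$ of the Witt ring of the (perfect) base field $k$ reduced modulo $p^j$. Hence, for a section $s$ of $W_j\sO_X$ and $r \in Wk$ one has $F(r\cdot s) = F(r)\cdot F(s) = \sigma(r)\cdot F(s)$ at the level of sheaves. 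Applying $H^i(X,\_)$ preserves this identity because $H^i$ is a $Wk$-linear functor from the category of sheaves carrying a compatible $\underline{Wk}_X$-action, and $F$ at the sheaf level is a $\sigma$-semilinear morphism with respect to the constant sheaf $\underline{Wk}_X$. Thus the same identity holds for cohomology classes.

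There is no real obstacle: the lemma is essentially a bookkeeping statement, asserting that the sheaf-level compatibilities listed in \autoref{rem:Witt_cohomology_identities} descend to cohomology. The only point worth a brief sentence in the write-up is the verification that on constants the sheaf map $F$ acts as $\sigma$, which follows directly from the explicit Witt-vector formulas $F(a_0,\dots,a_{j-1}) = (a_0^p,\dots,a_{j-1}^p)$ recalled in \autoref{notation:Witt_cohomologies}. Everything else is formal.
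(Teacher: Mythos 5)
Your proposal is correct and follows essentially the same route as the paper: the paper's proof is a one-line reduction to the identities recorded in \autoref{rem:Witt_cohomology_identities}, which assert precisely the sheaf-level semilinearity and its descent to cohomology via functoriality that you verify in detail. Your write-up simply makes explicit the bookkeeping (the $Wk$-module structure via $Wk \twoheadrightarrow W_jk$ and the fact that $F$ acts as $\sigma$ on constants) that the paper leaves implicit.
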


\begin{proof}
This follows from the identities of \autoref{rem:Witt_cohomology_identities}. 
\end{proof}

Additionally, we want to work for different compositions of the maps $F$, $V$, $p$, $R$ and the Bockstein operators. These are not always $W(k)$-linear. This is the motivation for the notion of generalized $W(k)_\sigma$-module homomorphism, defined below. 

\begin{definition}
\label{def:Wk_sigma_homomorphism}
Let $M$ and $N$ be $W(k)_{\sigma}$-modules, and let $\alpha \colon M \to N$ be a map of sets. Then, 
\begin{enumerate}
\item 

 $\alpha$ is an \emph{additive $F$-homomorphism} if it is an additive homomorphism such that the following diagram commutes:
\begin{equation}
\label{eq:Wk_sigma_homomorphism:F_homomorphism}
\xymatrix{
M \ar[d]_F \ar[r]^-{\alpha} & N \ar[d]^F \\
M \ar[r]_{\alpha} & N\rlap{;}  
}
\end{equation}
\item $\alpha$ is a \emph{$W(k)_{\sigma}$-homomorphism} if it is both an additive $F$-homomorphism and a $W(k)$-module homomorphism; 
\item
\label{itm:Wk_sigma_homomorphism:generalized}
 $\alpha$ is a \emph{generalized $W(k)_{\sigma}$-homomorphism} if it is an additive $F$-homomorphism and there is an $i \in \bZ$ such that for every $r \in W(k)$, the following diagram commutes:
\begin{equation*}
\xymatrix{
M \ar[r]^-{\alpha} \ar[d]_{x \mapsto r \cdot x} & N \ar[d]^{x \mapsto \sigma^{i}(r) \cdot x} \\
M \ar[r]_{\alpha} & N\rlap{.} 
}
\end{equation*}
The integer $i$ is called the \emph{index of $\alpha$}.
\end{enumerate}

\end{definition}

\begin{example}
\label{ex:F_V_R_p_generalized}
According to \autoref{rem:Witt_cohomology_identities}, any composition of the maps $F$, $V$, $R$, and $p$ on $H^i\left(X, W_j \sO_X\right)$ is a generalized $W(k)_{\sigma}$-module homomorphism. 
\end{example}

We draw attention to the fact that in the next proposition, some statements are about $W(k)$-modules and others are about $W(k)_{\sigma}$-modules. 

\begin{lemma}
\label{lem:submodules}
Let $M$ and $N$ be $W(k)_{\sigma}$-modules, and let $\alpha \colon M \to N$ be a generalized $W(k)_{\sigma}$-module homomorphism.
\begin{enumerate}
\item \label{itm:submodules:image_of_submodules} If\, $L \subseteq M$ is a $W(k)$-submodule of\, $M$, then $\alpha(L)$ is a $W(k)$-submodule of\, $N$. 
\item \label{itm:submodules:preimage_of_submodules} If\, $L \subseteq N$ is  a $W(k)$-submodule of\, $N$, then $\alpha^{-1}(L)$ is a $W(k)$-submodule of\, $M$.
\item \label{itm:submodules:kernel} $\ker \alpha$ is a $W(k)_{\sigma}$-submodule of\, $M$.
\item \label{itm:submodules:image} $\im \alpha$ is a $W(k)_{\sigma}$-submodule of\, $N$.
\item \label{itm:submodules:coker} $\coker \alpha$ inherits a natural $W(k)_{\sigma}$-module structure from $N$. 
\item \label{itm:submodules:length} $\length_{W(k)} M - \length_{W(k)} \ker \alpha = \length_{W(k)} \im\alpha$. $($This is not obvious because according to \autoref{def:Wk_sigma_homomorphism}, $\alpha$ need not be a $W(k)$-homomorphism.$)$
\end{enumerate}
\end{lemma}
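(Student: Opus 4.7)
The plan is to dispatch items (1)--(5) by direct verification from the two defining features of a generalized $Wk_\sigma$-module homomorphism---namely compatibility with $F$ in the square \autoref{eq:Wk_sigma_homomorphism:F_homomorphism} and twisted $Wk$-linearity via $\sigma^i$---and to handle (6) via a small twist trick, which I expect to be the main subtle point.

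For (1), given a $Wk$-submodule $L \subseteq M$ together with $r \in Wk$ and $m \in L$, I would rewrite $r \cdot \alpha(m) = \alpha(\sigma^{-i}(r) \cdot m)$, which lies in $\alpha(L)$ because $\sigma^{-i}(r) \in Wk$ by the bijectivity of $\sigma$ recorded in \autoref{lem:sigma_invertible}, and $L$ is $Wk$-stable. Part (2) is symmetric: if $\alpha(m) \in L$ then $\alpha(r \cdot m) = \sigma^i(r) \cdot \alpha(m) \in L$. Parts (3) and (4) then combine these with the commutative square \autoref{eq:Wk_sigma_homomorphism:F_homomorphism}: apply (2) with $L = 0$ to see that $\ker \alpha$ is a $Wk$-submodule of $M$, and observe $F(\ker \alpha) \subseteq \ker \alpha$ from $\alpha \circ F = F \circ \alpha$; dually, apply (1) with $L = M$ and use $F(\alpha(M)) = \alpha(F(M)) \subseteq \alpha(M)$. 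For (5) one notes that $\im \alpha \subseteq N$ is a $Wk_\sigma$-submodule by (4), so both the $Wk$-action and $F$ descend canonically to $N/\im \alpha$.

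The genuinely subtle point is (6). The induced bijection $\bar \alpha : M/\ker \alpha \to \im \alpha$ is only generalized $Wk_\sigma$ of index $i$, not a $Wk$-module homomorphism in the usual sense, so standard length-additivity in a short exact sequence of $Wk$-modules does not apply directly. To circumvent this I would introduce the $Wk$-module $N^{(i)}$ having the same underlying abelian group as $N$ but with the twisted scalar action $r *_{\mathrm{new}} n := \sigma^i(r) \cdot n$. Since $\sigma^i$ is a ring automorphism of $Wk$ by \autoref{lem:sigma_invertible}, the additive subgroups of $N$ stable under the original action coincide with those stable under the twisted action; in particular every $Wk$-submodule $L \subseteq N$ sits equally well inside $N^{(i)}$, a composition series in one is a composition series in the other, and the two $Wk$-lengths coincide.

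By construction $\alpha : M \to N^{(i)}$ is a genuine $Wk$-module homomorphism, so the standard first isomorphism theorem in the category of $Wk$-modules together with additivity of length on short exact sequences yields $\length_{Wk} M - \length_{Wk} \ker \alpha = \length_{Wk} \im \alpha$, with the image measured inside $N^{(i)}$. By the length-invariance observed in the previous paragraph, this equals $\length_{Wk} \im \alpha$ measured inside $N$ with its original structure, completing (6).
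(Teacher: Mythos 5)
Your proposal is correct. Items (1)--(5) follow the paper's proof essentially verbatim: the twisted-linearity identity $r\cdot\alpha(m)=\alpha(\sigma^{-i}(r)\cdot m)$ (resp.\ $\alpha(r\cdot m)=\sigma^{i}(r)\cdot\alpha(m)$) for images and preimages of submodules, and the commuting square \autoref{eq:Wk_sigma_homomorphism:F_homomorphism} for $F$-stability of kernel, image and cokernel. For item (6) your packaging differs slightly from the paper's: the paper considers the induced additive bijection $\tilde\alpha:\factor{M}{\ker\alpha}\to\im\alpha$ and argues directly, via points (1) and (2), that $\tilde\alpha$ puts the chains of $Wk$-submodules of the two sides in bijection, whence the lengths agree; you instead twist the target to $N^{(i)}$ so that $\alpha$ becomes an honest $Wk$-module homomorphism and invoke the standard first isomorphism theorem, then note that twisting by the automorphism $\sigma^{i}$ leaves the submodule lattice, hence the length, unchanged. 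The two arguments rest on exactly the same observation (bijectivity of $\sigma$ from \autoref{lem:sigma_invertible} preserves the submodule lattice), and your version is in fact the ``twist the objects rather than generalize the morphisms'' alternative that the paper itself advertises in the remark following \autoref{prop:category}; it is marginally more systematic, at the cost of introducing an auxiliary module.
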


\begin{proof}
Let $i$ be the index of $\alpha$; see point \autoref{itm:Wk_sigma_homomorphism:generalized} of \autoref{def:Wk_sigma_homomorphism}.

\eqref{itm:submodules:image_of_submodules}~
As $\alpha$  is additive, $\alpha(L)$ is an additive subgroup of $N$.  To see that it is in fact a $W(k)$-submodule, 
choose $m \in L$ and $r \in W(k)$. Then $r \cdot \alpha(m) = \alpha\left(\sigma^{-i}(r) \cdot m\right) \in \alpha( L)$.

\eqref{itm:submodules:preimage_of_submodules}~
Similarly, choose $m \in \alpha^{-1}(L)$ and $r \in W(k)$. Then $\alpha(r \cdot m) = \sigma^i(r) \cdot \alpha(m) \in L$, and hence $r \cdot m \in \alpha^{-1}(L)$.

\eqref{itm:submodules:kernel}~
$\im \alpha= \alpha(M)$ is a $W(k)$-submodule by point \autoref{itm:submodules:image_of_submodules}. So, we only have to show that $F(\im \alpha) \subseteq \im \alpha$. This follows from the commutative diagram \autoref{eq:Wk_sigma_homomorphism:F_homomorphism}. 

\eqref{itm:submodules:image}~
Similarly, $\ker \alpha = \alpha^{-1}(0)$ is a $W(k)$-submodule by point \autoref{itm:submodules:preimage_of_submodules}. So, we only have to show that $F(\ker \alpha) \subseteq \ker \alpha$, which again follows from \autoref{eq:Wk_sigma_homomorphism:F_homomorphism}.

\eqref{itm:submodules:coker}~
By point \autoref{itm:submodules:image}, $\im \alpha$ is a $W(k)$-submodule of $M$, and hence $\coker \alpha$ inherits a natural $W(k)$-module structure. Additionally, $F$ also descends to $\coker \alpha$ as $F (\im \alpha) \subseteq \alpha$  holds  by point \autoref{itm:submodules:image} as well.

\eqref{itm:submodules:length}~
Note that $\alpha$ induces an additive bijection $\widetilde{\alpha} \colon \factor{M}{\ker \alpha}  \to \im \alpha$. Using that $\alpha$ is a generalized $W(k)_{\sigma}$-homomorphism, we see that so is $\widetilde{\alpha}$. The only reason we are not ready is that if the index is not zero, then $\alpha$ might not be an actual $W(k)$-module homomorphism. However, by points \autoref{itm:submodules:image_of_submodules} and \autoref{itm:submodules:preimage_of_submodules}, the chains of submodules of $\factor{M}{\ker \alpha}$ and $\im \alpha$ correspond to each other via $\widetilde{\alpha}$. This concludes our proof. 
\end{proof}

\begin{lemma}
\label{lem:finite_lenght}
For every integer $j >0$, 
$\length_{W(k)} H^i\left(X, W_j \sO_X\right) $ is finite. 
\end{lemma}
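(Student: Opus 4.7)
The plan is to proceed by induction on $j$. For the base case $j = 1$, we have $W_1 \sO_X = \sO_X$, and since $X$ is projective, $H^i(X, \sO_X)$ is a finite-dimensional $k$-vector space. Every simple $Wk$-module is isomorphic to $Wk/pWk = k$, so the $Wk$-length of $H^i(X, \sO_X)$ equals its $k$-dimension and is therefore finite.

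For the inductive step, I would exploit the short exact sequence of sheaves of abelian groups
\begin{equation*}
0 \to \sO_X \xrightarrow{V^{j-1}} W_j \sO_X \xrightarrow{R} W_{j-1} \sO_X \to 0,
\end{equation*}
whose associated long exact cohomology sequence exhibits $H^i(X, W_j \sO_X)$ as sitting between $\im V^{j-1}_*$ (a quotient of $H^i(X, \sO_X)$) and $\im R_*$ (a submodule of $H^i(X, W_{j-1} \sO_X)$). The first has finite $Wk$-length by the base case, the second by the inductive hypothesis, so finiteness of the $Wk$-length propagates to $H^i(X, W_j \sO_X)$.

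The main obstacle the plan must address is that, by \autoref{rem:Witt_cohomology_identities}, the Verschiebung satisfies $V(r \cdot m) = \sigma^{-1}(r) \cdot V(m)$, so $V^{j-1}_*$ is only a generalized $Wk_\sigma$-homomorphism (of index $-(j-1)$) rather than a strict $Wk$-module homomorphism; in particular the long exact sequence is a priori only a sequence of abelian groups. However, \autoref{lem:submodules} is precisely tailored to this situation: parts \autoref{itm:submodules:image_of_submodules} and \autoref{itm:submodules:length} of that lemma imply that $\im V^{j-1}_*$ is nevertheless a genuine $Wk$-submodule of $H^i(X, W_j \sO_X)$ of $Wk$-length at most $\length_{Wk} H^i(X, \sO_X)$. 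Since $R$ is an honest ring homomorphism, $R_*$ is $Wk$-linear, so the exactness relation $\im V^{j-1}_* = \ker R_*$ produces an honest short exact sequence of $Wk$-modules $0 \to \im V^{j-1}_* \to H^i(X, W_j \sO_X) \to \im R_* \to 0$. Standard additivity of $Wk$-length in this sequence then yields
\begin{equation*}
\length_{Wk} H^i(X, W_j \sO_X) \leq \length_{Wk} H^i(X, \sO_X) + \length_{Wk} H^i(X, W_{j-1} \sO_X) < \infty,
\end{equation*}
completing the induction. The bookkeeping of non-$Wk$-linear structure maps is exactly what motivates the formalism of \autoref{sec:category}, and it is the only delicate point of an otherwise routine argument.
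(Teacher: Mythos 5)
Your proposal is correct and follows essentially the same route as the paper: induction on $j$, the base case from projectivity of $X$, and the exact sequence $H^i(X,\sO_X) \to H^i(X,W_j\sO_X) \to H^i(X,W_{j-1}\sO_X)$ induced by the Verschiebung and restriction, with \autoref{lem:submodules} supplying the length bookkeeping for the non-$Wk$-linear maps. Your write-up is in fact slightly more explicit than the paper's (naming the index of $V^{j-1}_*$ and noting that $R_*$ is honestly $Wk$-linear), but there is no substantive difference.
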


\begin{proof}
We give a proof by induction on $j$. If $j=1$, then $W_j \sO_X = \sO_X$, and the statement follows straight from the projectivity of $X$. For $j>1$, consider the exact sequence
\begin{equation}
\label{eq:finite_lenght:exact}
\xymatrix{
 H^i(X,\sO_X) \ar[r]^-V & H^i\left(X,W_j \sO_X\right) \ar[r]^-R & H^i\left(X,W_{j-1} \sO_X\right).
}
\end{equation}
According to \autoref{lem:Witt_coho_sigma_Wk_module} and \autoref{ex:F_V_R_p_generalized}, the modules and the maps of \autoref{eq:finite_lenght:exact} are $W(k)_{\sigma}$-modules and generalized $W(k)_{\sigma}$-module homomorphisms. By the induction hypothesis, their lengths over $W(k)$ are finite. Then \autoref{lem:submodules} concludes our proof. 
\end{proof}

\begin{definition}
For a $W(k)_\sigma$-module $M$ with $\length_{W(k)} M$ finite, consider $F^e ( M) \subseteq  M$  for every integer $e>0$.  Applying \autoref{lem:submodules} with $\alpha= F^e$, this gives a descending chain of $W(k)_{\sigma}$-submodules of $M$. As $\length_{W(k)} M< \infty$, this chain stabilizes. Hence, we may define 
\begin{equation*}
M^{\sstab}:= F^e ( M) \quad\textrm{for } e \gg 0.
\end{equation*}
Note: By point \autoref{itm:submodules:image} of \autoref{lem:submodules}, $M^{\sstab}$ is a $W(k)_{\sigma}$-submodule.
\end{definition}

\begin{proposition}
\label{prop:category}
Let $\sC$ be the category of finite $W(k)$-length $W(k)_{\sigma}$-modules with arrows being the generalized $W(k)_{\sigma}$-module homomorphisms. 
\begin{enumerate}
\item \label{itm:category:abelian}  For any arrow $\alpha$ in $\sC$,  $\ker \alpha$, $\im \alpha$, and $\coker \alpha$ are also in $\sC$  $($here $\ker \alpha$, $\im \alpha$, and $\coker \alpha$ are taken as for additive groups, and then they are endowed with a $W(k)_{\sigma}$-module structure using \autoref{lem:submodules}\,$)$. 
\item \label{itm:category:exact} $M \mapsto M^{\sstab}$ is an exact functor $\sC \to \sC$.
\item \label{itm:category:length} $\length_{W(k)} (\_)$ is additive in exact sequences in $\sC$. 
\end{enumerate}
\end{proposition}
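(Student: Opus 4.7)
The plan is to dispatch part (1) first, then reduce part (2) to establishing a Fitting-type direct sum decomposition, and finally deduce part (3) from repeated application of \autoref{lem:submodules}.

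For part (1), observe that \autoref{lem:submodules} already endows $\ker \alpha$, $\im \alpha$ and $\coker \alpha$ with $Wk_{\sigma}$-module structures; the only remaining point is finite $Wk$-length, which is automatic since each is a sub- or quotient $Wk$-module of an object of $\sC$.

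For part (2), the core step is to prove a Fitting-style decomposition
\begin{equation*}
M \;=\; \ker F^e \;\oplus\; M^{\sstab} \qquad \text{as } Wk_{\sigma}\text{-modules, for all } e \gg 0.
\end{equation*}
The ascending chain $\ker F \subseteq \ker F^2 \subseteq \cdots$ and the descending chain $F(M) \supseteq F^2(M) \supseteq \cdots$ both stabilize by the finite $Wk$-length hypothesis. For $e$ past their common stabilization threshold, the restriction $F^e|_{M^{\sstab}}$ is surjective (since $F^e(M^{\sstab}) = F^{2e}(M) = M^{\sstab}$), so the length identity of \autoref{lem:submodules} forces its kernel to have zero length, i.e.\ $F^e|_{M^{\sstab}}$ is bijective. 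Hence $\ker F^e \cap M^{\sstab} = 0$, and a final length count produces $M = \ker F^e + M^{\sstab}$. Every morphism $f$ in $\sC$ commutes with $F^e$, so it preserves both summands. Given a short exact sequence $0 \to M' \to M \to M'' \to 0$, choose $e$ simultaneously large for all three modules; the sequence then splits as a direct sum of two short sequences, one on the kernels of $F^e$ and one on the semistable parts. Exactness on each summand is verified directly: injectivity and surjectivity are inherited from the ambient sequence, and exactness in the middle uses uniqueness of the Fitting decomposition --- if $x \in M^{\sstab}$ equals $\iota(m')$ for some $m' \in M'$, decomposing $m' = k' + s'$ into its kernel and semistable parts and applying $\iota$ forces $\iota(k') = 0$ and $x = \iota(s') \in \iota\bigl((M')^{\sstab}\bigr)$.

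For part (3), break a general exact sequence in $\sC$ into short exact sequences via the kernels and cokernels supplied by part (1). For a short exact sequence $0 \to M' \xrightarrow{\iota} M \xrightarrow{\pi} M'' \to 0$, applying the length identity of \autoref{lem:submodules} to $\iota$ and to $\pi$ and using $\im \iota = \ker \pi$ yields $\length_{Wk} M = \length_{Wk} M' + \length_{Wk} M''$. The main technical obstacle in the whole proposition is the Fitting decomposition: since morphisms in $\sC$ need not be $Wk$-linear, the usual rank-nullity argument is unavailable, but the generalized length identity of \autoref{lem:submodules} serves as a direct replacement, and once it is in hand the rest of the proposition follows formally.
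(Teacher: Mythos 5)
Your proposal is correct, and parts (1) and (3) coincide with the paper's treatment (both are read off from \autoref{lem:submodules}, with part (3) being exactly point \autoref{itm:submodules:length}). Where you genuinely diverge is part (2). The paper's \autoref{lem:semi_stable_exact} attacks middle-exactness head-on: it reduces the claim to $(\ker \beta)^{\sstab} = N^{\sstab} \cap \ker \beta$ and proves it by showing that $F$ restricted to $N^{\sstab} \cap \ker \beta$ is injective (from bijectivity of $F$ on $N^{\sstab}$) and hence bijective by the length identity. You instead establish the stronger structural statement $M = \ker F^e \oplus M^{\sstab}$ for $e \gg 0$ --- a Fitting-type decomposition into ``nilpotent'' and semistable parts, standard in Dieudonn\'e-type settings --- and then read off exactness of $(\_)^{\sstab}$ because every arrow in $\sC$ respects the splitting, so the functor becomes projection onto a direct summand. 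Both routes ultimately rest on the same two ingredients (bijectivity of $F$ on the semistable part and the length identity of \autoref{lem:submodules}, which substitutes for the unavailable $Wk$-linear rank--nullity), but your decomposition buys a cleaner conceptual picture and identifies the complement of $M^{\sstab}$ explicitly, at the mild cost of having to choose $e$ uniformly across the three terms of the sequence --- harmless, since both chains $\ker F^e$ and $F^e(M)$ have stabilized, so the splitting is independent of the choice. Your uniqueness-of-decomposition argument for exactness in the middle is complete and correct.
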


\begin{remark}
The category $\sC$ of \autoref{prop:category} has somewhat unusual properties too, which are not mentioned in \autoref{prop:category}. This is due to allowing generalized $W(k)_{\sigma}$-module homomorphisms of different indices into the category. The main issue is that it is not possible to add homomorphisms with different indices, or to construct homomorphisms to products induced by component homomorphisms of different indices. Therefore, $\sC$ is not abelian, and it does not have products. One can solve this by disallowing generalized $W(k)_{\sigma}$-homomorphisms and introducing instead a notion of twist of the objects (\textit{i.e.}, twisting the $W(k)_{\sigma}$-structure by an adequate power of $\sigma$). This way one can turn every commutative diagram in $\sC$ into a commutative diagram in this more restrictive category by adequately twisting the modules. To avoid writing twists in each diagram, we chose the first approach. 
\end{remark}

We prove \autoref{prop:category} after a few more lemmas.

\begin{lemma}
\label{lem:Wk_sigma_modules_main}
Let $M$ and $N$ be two finite $W(k)$-length $W(k)_{\sigma}$-modules, and let $\alpha \colon M \to N$ be an additive  $F$-homomorphism. Then 
\begin{enumerate}
\item \label{itm:Wk_sigma_modules_main:isomorphism} $F|_{M^{\sstab}}$ is an isomorphism; 
\item \label{itm:Wk_sigma_modules_main:containment} $\alpha ( M^{\sstab})  \subseteq  N^{\sstab}$;  
\item \label{itm:Wk_sigma_modules_main:equals} if $\alpha$ is surjective, then $\alpha (M^{\sstab} ) = N^{\sstab}$. 
\end{enumerate}
\end{lemma}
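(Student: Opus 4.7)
My plan is to treat the three parts in order, with (1) being the main technical step and (2)–(3) following almost formally by noticing that $\alpha$ commutes with $F$.

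For part \autoref{itm:Wk_sigma_modules_main:isomorphism}, I would argue as follows. By definition there exists $e_0$ such that $M^{\sstab} = F^e(M)$ for all $e \geq e_0$. Applying $F$ to this, $F(M^{\sstab}) = F^{e_0+1}(M) = M^{\sstab}$, so $F|_{M^{\sstab}}$ is surjective. For injectivity, I would invoke point \autoref{itm:submodules:length} of \autoref{lem:submodules}, which is exactly the length-additivity for generalized $Wk_\sigma$-homomorphisms, together with \autoref{ex:F_V_R_p_generalized} (so that $F|_{M^{\sstab}}$ qualifies). This yields
\begin{equation*}
\length_{Wk} M^{\sstab} = \length_{Wk} \ker\!\left(F|_{M^{\sstab}}\right) + \length_{Wk} \im\!\left(F|_{M^{\sstab}}\right) = \length_{Wk} \ker\!\left(F|_{M^{\sstab}}\right) + \length_{Wk} M^{\sstab},
\end{equation*}
which forces $\ker(F|_{M^{\sstab}}) = 0$ by the finiteness of $\length_{Wk} M^{\sstab}$ (guaranteed by the finite length hypothesis on $M$ and point \autoref{itm:submodules:image} of \autoref{lem:submodules}).

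For part \autoref{itm:Wk_sigma_modules_main:containment}, the key observation is that because $\alpha$ is an additive $F$-homomorphism, the diagram \autoref{eq:Wk_sigma_homomorphism:F_homomorphism} gives $\alpha \circ F^e = F^e \circ \alpha$ for all $e \geq 0$. Choosing $e$ large enough so that simultaneously $F^e(M) = M^{\sstab}$ and $F^e(N) = N^{\sstab}$, I compute
\begin{equation*}
\alpha\!\left(M^{\sstab}\right) = \alpha\!\left(F^e(M)\right) = F^e\!\left(\alpha(M)\right) \subseteq F^e(N) = N^{\sstab},
\end{equation*}
which is the claim (reading the target of $\alpha$ as $N$). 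Part \autoref{itm:Wk_sigma_modules_main:equals} then follows immediately: if $\alpha$ is surjective, the inclusion above becomes an equality because $\alpha(M) = N$.

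The only real obstacle is the injectivity claim in part \autoref{itm:Wk_sigma_modules_main:isomorphism}; it is precisely the place where a reader might worry that $F$ is not a $Wk$-linear map and hence that surjective-implies-injective could fail for length reasons. The generalized length-additivity statement \autoref{lem:submodules}\autoref{itm:submodules:length} is designed to handle exactly this situation, and once that is invoked the rest is formal.
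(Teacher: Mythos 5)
Your proof is correct and follows essentially the same route as the paper's: surjectivity of $F|_{M^{\sstab}}$ from the stabilization of the chain $F^e(M)$, injectivity via the length additivity of point \autoref{itm:submodules:length} of \autoref{lem:submodules}, and parts \autoref{itm:Wk_sigma_modules_main:containment}--\autoref{itm:Wk_sigma_modules_main:equals} by commuting $\alpha$ with a large power of $F$. The only nitpick is the citation of \autoref{ex:F_V_R_p_generalized} (which concerns Witt cohomology specifically) to justify that $F|_{M^{\sstab}}$ is a generalized $Wk_{\sigma}$-endomorphism; for an abstract $Wk_{\sigma}$-module this is immediate from \autoref{eq:Wk_sigma_module} in \autoref{def:Wk_sigma_module}, which exhibits $F$ as a generalized homomorphism of index $1$.
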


\begin{proof}
\eqref{itm:Wk_sigma_modules_main:isomorphism}~
  Fix an $e>0$ such that $F^{e'}(M) = F^e(M)$ for every $e' \geq e$. Then $F|_{F^{e}(M)} \colon F^e(M) \to F^{e+1}(M)= F^e(M)$ is surjective. Now, point \autoref{itm:submodules:length} of \autoref{lem:submodules} shows that $\length = \ker \left(F|_{F^e(M)} \right) = 0$; that is, $F|_{F^{e}(M)}$ is bijective. 

\eqref{itm:Wk_sigma_modules_main:containment}~
  Diagram \autoref{eq:Wk_sigma_homomorphism:F_homomorphism} yields the following commutative diagram:
\begin{equation}
\label{eq:Wk_sigma_modules_main:commutes}
\xymatrix{
M \ar[d]_{F^e} \ar[r]^-{\alpha} & N \ar[d]^{F^e} \\
M \ar[r]_{\alpha} & N\rlap{.}  
}
\end{equation}
The statement of the present point then follows directly from diagram \autoref{eq:Wk_sigma_modules_main:commutes}.

\eqref{itm:Wk_sigma_modules_main:equals}~
This also follows from \autoref{eq:Wk_sigma_modules_main:commutes}, taking into account that $\alpha$ is surjective. 
\end{proof}

\begin{remark}
\label{rem:semi_stable_arrow}
Point \autoref{itm:Wk_sigma_modules_main:containment} of \autoref{lem:Wk_sigma_modules_main} implies that if $\alpha \colon M \to N$ is a generalized $W(k)_{\sigma}$-module homomorphism, then there is an induced generalized $W(k)_{\sigma}$-module homomorphism $\alpha^{\sstab} \colon M^{\sstab} \to N^{\sstab}$. 
\end{remark}

\begin{lemma}
\label{lem:semi_stable_exact}
Consider an exact sequence of\, $W(k)_{\sigma}$-modules of finite $W(k)$-length with arrows being generalized $W(k)_{\sigma}$-module homomorphisms:
\begin{equation*}
\xymatrix{
0 \ar[r] & M \ar[r]^-{\alpha} & N \ar[r]^-{\beta} & L \ar[r] & 0. 
}
\end{equation*}
Then 
\begin{equation*}
\xymatrix{
0 \ar[r] & M^{\sstab} \ar[r]^-{\alpha^{\sstab}} & N^{\sstab} \ar[r]^-{\beta^{\sstab}} & L^{\sstab} \ar[r] & 0 
}
\end{equation*}
is exact.
\end{lemma}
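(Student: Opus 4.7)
The plan is to verify the three parts of exactness directly, leaning on \autoref{lem:Wk_sigma_modules_main}. The key facts I will need are: $F$ restricts to a bijection on each of $M^{\sstab}$, $N^{\sstab}$, $L^{\sstab}$ (point \autoref{itm:Wk_sigma_modules_main:isomorphism}); any generalized $Wk_{\sigma}$-morphism sends the semi-stable part into the semi-stable part (point \autoref{itm:Wk_sigma_modules_main:containment}); and a surjective such morphism restricts to a surjection on semi-stable parts (point \autoref{itm:Wk_sigma_modules_main:equals}).

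Injectivity of $\alpha^{\sstab}$ is immediate, since it is just the restriction of the injective map $\alpha$ to the submodule $M^{\sstab} \subseteq M$. Surjectivity of $\beta^{\sstab}$ is point \autoref{itm:Wk_sigma_modules_main:equals} of \autoref{lem:Wk_sigma_modules_main} applied to $\beta$. Thus only the middle exactness requires real work, the inclusion $\im \alpha^{\sstab} \subseteq \ker \beta^{\sstab}$ again being immediate from $\beta \circ \alpha = 0$.

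For the reverse inclusion, I fix $e \gg 0$ large enough that $F^e(M) = M^{\sstab}$, $F^e(N) = N^{\sstab}$ and $F^e(L) = L^{\sstab}$, and take $n \in \ker \beta^{\sstab}$. Since $F$ is a bijection on $N^{\sstab}$, I write $n = F^e(\widetilde n)$ with $\widetilde n \in N^{\sstab}$. Applying $\beta$ and using that $\alpha, \beta$ are additive $F$-homomorphisms, $0 = \beta(n) = F^e(\beta(\widetilde n))$; and since $\beta(\widetilde n)$ already lies in $L^{\sstab}$ (by \autoref{itm:Wk_sigma_modules_main:containment}) where $F^e$ is injective, this forces $\beta(\widetilde n) = 0$. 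By exactness of the original sequence, $\widetilde n = \alpha(\widetilde m)$ for some $\widetilde m \in M$; then $n = F^e \alpha(\widetilde m) = \alpha(F^e \widetilde m)$, and $F^e \widetilde m \in F^e(M) = M^{\sstab}$, whence $n \in \im \alpha^{\sstab}$.

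The one delicate point in this approach is the step where one passes from $F^e(\beta(\widetilde n)) = 0$ to $\beta(\widetilde n) = 0$; this is why it is essential to first relocate $\beta(\widetilde n)$ inside $L^{\sstab}$, where $F$ is invertible, before inverting $F^e$. The use of generalized rather than honest $Wk_{\sigma}$-homomorphisms causes no trouble, because only additivity and commutation with $F$ enter the argument, and both are built into \autoref{def:Wk_sigma_homomorphism}.
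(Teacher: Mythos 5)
Your proof is correct, and it reaches the same reduction as the paper: both arguments dispose of injectivity of $\alpha^{\sstab}$ and surjectivity of $\beta^{\sstab}$ immediately via \autoref{lem:Wk_sigma_modules_main}, and both identify the content as the containment $\ker \beta^{\sstab} \subseteq \im \alpha^{\sstab}$. Where you genuinely diverge is in the mechanism for that containment. The paper works at the level of submodules: it reformulates the claim as $(\ker\beta)^{\sstab} = N^{\sstab} \cap \ker\beta$ and proves it by showing $F$ restricts to a bijection of $N^{\sstab}\cap\ker\beta$, where injectivity comes from $F|_{N^{\sstab}}$ and surjectivity requires one more application of the length-additivity statement (point \autoref{itm:submodules:length} of \autoref{lem:submodules}, i.e.\ an injective endomorphism of a finite-length module is surjective). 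Your element chase avoids that extra counting step: you invert $F^e$ on $N^{\sstab}$ to produce $\widetilde n$, and then detect $\widetilde n \in \ker\beta$ by pushing into $L^{\sstab}$ and using injectivity of $F$ there. The one point you rightly flag — that $F^e(\beta(\widetilde n))=0$ only forces $\beta(\widetilde n)=0$ after relocating $\beta(\widetilde n)$ into $L^{\sstab}$ — is exactly the place where a careless argument would fail, and your handling of it is correct. Note also that your final step does not even need $\widetilde m \in M^{\sstab}$: applying $F^e$ to $\widetilde m$ lands in $M^{\sstab}$ automatically, which is a small economy over the paper's reformulation through $\alpha(M^{\sstab})$. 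Both proofs rest on the same underlying finiteness input (stabilization of $F^e(-)$ and bijectivity of $F$ on the stable part), so neither is more general, but yours uses the counting argument one fewer time.
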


\begin{proof}
According to point \autoref{itm:Wk_sigma_modules_main:equals} of \autoref{lem:Wk_sigma_modules_main}, we only have to show that $\ker \beta^{\sstab} = \im \alpha^{\sstab}$. This is equivalent to showing that $\alpha(M^{\sstab}) = N^{\sstab} \cap \ker \beta$, which is further equivalent to $(\ker \beta)^{\sstab} = N^{\sstab} \cap \ker \beta$. We prove this last one. As $(\ker \beta)^{\sstab}$ is contained in both $N^{\sstab}$ and $\ker \beta$, we have $(\ker \beta)^{\sstab} \subseteq N^{\sstab} \cap \ker \beta$. So, we only have to show the opposite containment, for which it is enough to show that $F|_{N^{\sstab}  \cap \ker \beta}$ is bijective. 

We note at this point that as both $\ker \beta$ and $N^{\sstab}$ are $W(k)_{\sigma}$-submodules of $N$, so is $N^{\sstab} \cap \ker \beta$. In particular, $F|_{N^{\sstab}  \cap \ker \beta}$ is a generalized $W(k)_{\sigma}$-module endomorphism of $N^{\sstab} \cap \ker \beta$. Additionally, by point~\autoref{itm:Wk_sigma_modules_main:isomorphism} of \autoref{lem:Wk_sigma_modules_main}, $F|_{N^{\sstab}}$ is injective. So, we obtain that $F|_{N^{\sstab}  \cap \ker \beta}$ is an injective generalized $W(k)_{\sigma}$-module endomorphism of $N^{\sstab} \cap \ker \beta$. Point \autoref{itm:submodules:length} of \autoref{lem:submodules} then shows that this endomorphism in fact has to be surjective, and hence  bijective. 
\end{proof}

\begin{proof}[Proof of \autoref{prop:category}]
Point \autoref{itm:category:abelian} is shown in \autoref{lem:submodules}. Point \autoref{itm:category:exact} is shown in \autoref{rem:semi_stable_arrow} and \autoref{lem:semi_stable_exact}. Point \autoref{itm:category:length} is shown in point \autoref{itm:submodules:length} of \autoref{lem:submodules}.
\end{proof}

\subsection{Witt non-vanishing criterion}
\label{sec:Witt_non_vanishing}

\begin{theorem}
\label{prop:Witt_cohom_non_zero}
If for a projective variety $X$ over $k$ of dimension $n>0$, the inequality
\begin{equation}
\label{eq:Witt_cohom_non_zero:assumption}
\dim_k H^{n-1}(X, \sO_X)^{\sstab} < \dim_k H^n (X, \sO_X)^{\sstab}
\end{equation}
holds, then $H^n(X, W \sO_{X, \bQ} ) \neq 0$. 

In particular, if $X$ additionally is normal and it has $W \sO$-rational singularities $($e.g., $X$ is smooth$)$, then $X$ is not uniruled.
\end{theorem}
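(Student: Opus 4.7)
The plan is to construct an element $y \in H^n(X, W\sO_X) = \varprojlim_j H^n(X, W_j\sO_X)$ (the limit identification being valid since $R$ is surjective on top cohomology) that is not $p$-torsion; equivalently, $p^i y \neq 0$ in the limit for every $i \geq 1$. Setting $M_j := H^n(X, W_j\sO_X)^{\sstab}$ and working in the category $\sC$ of \autoref{prop:category}, I will build $y$ as a compatible system $(y_j)$ specifying, for a sparse subsequence $j_0 < j_1 < \cdots$, values $y_{j_i} \in M_{j_i}$ with $p^i y_{j_i} \neq 0$. Once $H^n(X, W\sO_{X,\bQ}) \neq 0$ is established, the uniruledness clause follows by Esnault's criterion \cite{Esnault_Varieties_over_a_finite_field_with_trivial_Chow_group_of_0_cycles_have_a_rational_point}, extended to $W\sO$-rational singularities in \cite[Prop 4.6]{Patakfalvi_Zdanowicz_Ordinary_varieties_with_trivial_canonical_bundle_are_not_uniruled}.

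Two estimates drive the construction. First, the growth $\length_{Wk} M_j \geq j \delta$ with $\delta := \dim_k H^n(X, \sO_X)^{\sstab} - \dim_k H^{n-1}(X, \sO_X)^{\sstab} > 0$, obtained by applying the exact functor $(-)^{\sstab}$ (\autoref{prop:category}) to the long exact cohomology sequence of $0 \to W_{j-1}\sO_X \xrightarrow{V} W_j\sO_X \xrightarrow{R^{j-1}} \sO_X \to 0$ and using the hypothesis \autoref{eq:Witt_cohom_non_zero:assumption}; the dual sequence $0 \to \sO_X \xrightarrow{V^{j-1}} W_j\sO_X \xrightarrow{R} W_{j-1}\sO_X \to 0$ also yields the surjectivity $R: M_j \twoheadrightarrow M_{j-1}$. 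Second, a bound $\length_{Wk} K_i^j \leq iC$ for $K_i^j := \ker(p^i: M_j \to M_j)$ and $C := \dim_k H^n(X, \sO_X) + \dim_k H^{n-1}(X, \sO_X)$, obtained by writing $p^i = V^i F^i R^i$ (consequence of the relations $p = VFR$, $FR = RF$, $FV = VF$ in \autoref{rem:Witt_cohomology_identities}), using that $F^i|_{M_{j-i}}$ is bijective (\autoref{lem:Wk_sigma_modules_main}), and bounding $\length \ker V^i$ and $\length \ker R^i$ on $H^n(X, W_\bullet \sO_X)$ by $i \dim_k H^{n-1}(X, \sO_X)$ and $i \dim_k H^n(X, \sO_X)$ respectively (both from short exact sequences of truncated Witt sheaves and an induction on $i$).

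The construction then proceeds by induction. Pick any $y_{j_0} \in M_{j_0}$; given $y_{j_{i-1}}$, choose $j_i$ large enough that $(j_i - j_{i-1})\delta > iC$. Then $\length \ker(R^{j_i - j_{i-1}}|_{M_{j_i}}) = \length M_{j_i} - \length M_{j_{i-1}}$ strictly exceeds $\length K_i^{j_i}$, so the fiber $(R^{j_i - j_{i-1}})^{-1}(y_{j_{i-1}}) \subseteq M_{j_i}$, being a coset of this kernel, cannot lie inside the proper submodule $K_i^{j_i}$; pick $y_{j_i}$ in the fiber but outside $K_i^{j_i}$, forcing $p^i y_{j_i} \neq 0$. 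Interpolating $y_j := R^{j_i - j} y_{j_i}$ for $j_{i-1} \leq j < j_i$ assembles a compatible system $y \in \varprojlim_j M_j$, and because $R$ commutes with $p$ the non-vanishing $p^i y_{j_i} \neq 0$ propagates to $p^i y_j \neq 0$ for all $j \geq j_i$, giving $p^i y \neq 0$ in the limit for every $i$. The main obstacle is this inductive step: simultaneously meeting the compatibility relation $R^{j_i - j_{i-1}} y_{j_i} = y_{j_{i-1}}$ and avoiding the progressively larger submodules $K_i^{j_i}$ requires precisely the quantitative balance between $\length_{Wk} M_j \geq j\delta$ and $\length_{Wk} K_i^j \leq iC$, and providing this balance is the central purpose of the category-theoretic framework developed in \autoref{sec:category}.
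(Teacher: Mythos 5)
Your proposal is correct and follows essentially the same strategy as the paper: the growth estimate $\length_{Wk} H^n(X,W_j\sO_X)^{\sstab}\geq j\delta$ via the exact functor $(-)^{\sstab}$, an induction producing a sparse subsequence $j_i$ with $p^i y_{j_i}\neq 0$, and the coset-versus-submodule length comparison. The only (harmless) variation is that you bound $\length\ker\bigl(p^i|_{M_{j_i}}\bigr)$ directly by $iC$ (which forces you to also control $\ker R^i$), whereas the paper factors $p^i=F^iV^iR^i$, picks the element one level down at $t=j_i-i$ avoiding only $\im B^{\sstab}_{i,t}=\ker V^i$, and then uses surjectivity of $R^{\sstab}$ to lift arbitrarily.
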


\begin{proof}
The addendum follows directly from \cite[Proposition~4.6]{Patakfalvi_Zdanowicz_Ordinary_varieties_with_trivial_canonical_bundle_are_not_uniruled}.
So, we only show the statement that $H^n\left(X, W\sO_{X,\bQ}\right) \neq 0$.

Throughout the rest of the proof, all our cohomology groups are in the category $\sC$ of \autoref{prop:category}, except a one-time mention of $H^n(X, W\sO_X)$. In particular, we will use the statements of \autoref{prop:category}, without each time explicitly indicating a reference to that proposition.

\begin{enumerate}[label=\fbox{Step \arabic*:}, ref=\arabic*,wide]
\setcounter{enumi}{-1}
\item\label{step0} %  \fbox{\scshape Step 0:}
{\it Initial setup.} 
Set 
\begin{equation*}
r:= \dim_k H^n (X, \sO_X)^{\sstab} = \length_{W(k)} H^n (X, \sO_X)^{\sstab}.
\end{equation*}
It is enough to exhibit 
\begin{equation*}
x = (x_j) \in H^n(X, W \sO_{X} )= \varprojlim H^n\left(X, W_j \sO_X\right)
\end{equation*}
 such that 
\begin{equation}
\label{eq:Witt_cohom_non_zero:goal}
 \forall i  \geq 1,   \ p^i x \neq 0 
 \quad
 \Longleftrightarrow
 \quad
 \forall i \geq  1, \  \exists j_i >0 \ :\  p^ix_{j_i} \neq 0.
\end{equation}
So, our goal is to exhibit $x_{j_i}$ as above satisfying the second equivalent condition of \autoref{eq:Witt_cohom_non_zero:goal}. We will do this by induction on $i$, and  we will  choose $x_{j_i}$ such that $x_{j_i} \in H^n \left(X, W_{j_i} \sO_X \right)^{\sstab}$.

\item\label{step1} %\fbox{\scshape Step 1:}
{\it The semi-stable subspace grows indefinitely.}  Consider, for any integer $j \geq 0$, the exact sequence 
\begin{equation*}
\xymatrix{
H^{n-1}\left(X, W_j\sO_X\right) \ar[r]^-B & H^n(X, \sO_X) \ar[r]^-V & H^n\left(X,W_{j+1} \sO_X\right) \ar[r]^-R & H^n\left(X,W_j \sO_X\right) \ar[r] & 0. 
}
\end{equation*}
By taking the semi-stable subspace, we obtain another exact sequence:
\begin{equation}
\label{eq:Witt_cohom_non_zero:basic_coho}
\xymatrix{
H^{n-1}\left(X, W_j\sO_X\right)^{\sstab} \ar[r]^-{B^{\sstab}} & H^n(X, \sO_X)^{\sstab} \ar[r]^-{V^{\sstab}} & H^n\left(X,W_{j+1} \sO_X\right)^{\sstab} \ar[r]^-{R^{\sstab}} & H^n\left(X,W_j \sO_X\right)^{\sstab} \ar[r] & 0. 
}
\end{equation}
By taking $\length_{W(k)}(\_)$ and using \autoref{eq:Witt_cohom_non_zero:assumption},  we obtain that 
\begin{equation}
\label{eq:Witt_cohom_non_zero:grows}
\length_{W(k)} H^n\left(X,W_{j+1} \sO_X\right)^{\sstab} > \length_{W(k)} H^n\left(X,W_j \sO_X\right)^{\sstab} \Longrightarrow \length_{W(k)} H^n\left(X,W_j \sO_X\right)^{\sstab} \geq j .
\end{equation}

\item\label{step2} %\fbox{\scshape Step 2:}
{\it $R^{\sstab} \colon H^n\left(X, W_{j+1} \sO_X\right)^{\sstab} \to H^n\left(X,W_j \sO_X\right)^{\sstab}$ is surjective.} This follows from $(\_)^{\sstab}$ being an exact functor and  $H^n\left(X, W_{j+1} \sO_X\right) \to H^n\left(X,W_j \sO_X\right)$ being surjective. 

\item\label{step3} %\fbox{\scshape Step 3:}
  {\it Start of the induction.} As $H^n(X, \sO_X) \neq 0$ by \autoref{eq:Witt_cohom_non_zero:assumption}, we may set $j_0:=1$, and we may choose $x_{j_0} \in H^n(X, \sO_X)$ to be any non-zero element. 

\item\label{step4} %\fbox{\scshape Step 4:}
  {\it Induction step, initial setup.} So, fix an integer $i>0$, and assume that $x_{j_{i-1}} \in H^n\left(X, W_{j_{i-1}} \sO_X\right)^{\sstab}$ is chosen. In particular, we have $p^{i-1} x_{j_{i-1}} \neq 0$. We have to choose $j_i> j_{i-1}$ and  $x_{j_i} \in H^n\left(X, W_{j_i} \sO_X\right)^{\sstab}$ such that $R^{j_i - j_{i-1}}\left(x_{j_i} \right)= x_{j_{i-1}}$ and $p^i x_{j_i} \neq 0$. 
 
Now consider,  for any integer $t>j_{i-1}$, the diagram
\begin{equation*}
\xymatrix{
0 \ar[r]  & W_{t} \sO_X \ar[r]^-{V^i} & W_{t+i} \sO_X  \ar[r]^-{R^{t}} & W_i \sO_X \ar[r] & 0. 
}
\end{equation*}
Taking cohomology and then the semi-stable subspace, we obtain 
\begin{equation}
\label{eq:Witt_cohom_non_zero:Verschiebung}
\xymatrix{
H^{n-1}\left(X, W_{i} \sO_X\right)^{\sstab}   \ar[r]^-{B^{\sstab}_{i,t}}  & H^n\left(X,W_{t} \sO_X\right)^{\sstab} \ar[r]^-{V^i} &  H^n\left(X,W_{t+i} \sO_X\right)^{\sstab}/  
}
\end{equation}
According to \autoref{eq:Witt_cohom_non_zero:grows}, we may choose a $t > j_{i-1}$ such that $Z:=\Ker \alpha \not\subseteq \im B^{\sstab}_{i,t}=:M$, where $\alpha$ is the homomorphism  $\left(R^{t-j_{i-1}}\right)^{\sstab} \colon H^n\left(X, W_t \sO_X\right)^{\sstab} \to H^n\left(X, W_{j_{i-1}} \sO_X\right)^{\sstab} $. Fix this value of $t$, and set $j_i:=t+i$. 

\item\label{step5} %\fbox{\scshape Step 5:}
  {\it We claim that $ \alpha^{-1} \left( x_{j_{i-1}} \right) \not\subseteq M$.} Indeed, assume the opposite, that is, that $ \alpha^{-1} \left( x_{j_{i-1}} \right) \subseteq M$. By Step~\ref{step2}, there is a $ z \in \alpha^{-1} \left( x_{j_{i-1}} \right)$. Hence, we have $Z+z =\alpha^{-1} \left( x_{j_{i-1}} \right)$, and then $Z +z \subseteq M$. Using the fact that $M$ is additively closed, we then have  the implications
  \[z \in  M \Longrightarrow -z \in M \Longrightarrow -z + (z+ Z ) = Z \subseteq M.\] This contradicts the choice of $t$ made in Step~\ref{step4}. 

\item\label{step6} %{\scshape Step 6:
  {\it Conclusion of the induction step.}
By Step~\ref{step5}, we may choose a $z'$ in $\alpha^{-1} \left( x_{j_{i-1}} \right) \setminus M$. In particular, as $M$ is the image of the left-side map of the exact sequence in \autoref{eq:Witt_cohom_non_zero:Verschiebung}, we obtain that $V^i  (z') \neq 0$. Now choose  $x_{j_i}$ to be any element of $H^n\left(X, W_{j_i} \sO_X \right)^{\sstab}$ mapping to $z'$. By Step~\ref{step2}, this is possible. Additionally, as $V^i(z') \neq 0$, we have
\begin{equation*}\pushQED{\qed}
p^i x_{j_i} = F^i V^i R^i \left( x_{j_i} \right) = 
F^i V^i (z')
\expl{\neq}{$0 \neq V^i(z') \in H^n \left( X, W_{j_i} \sO_X \right)^{\sstab}$, and $F$ is bijective on $H^n \left( X, W_{j_i} \sO_X \right)^{\sstab}$}
 0\qedhere \popQED
\end{equation*}
\end{enumerate}\renewcommand{\qed}{}  
\end{proof}

\subsection{Deformation of (Frobenius) semi-stable subspaces}
\label{sec:deformation_Frobenius_action}

Recall the following way of defining different Frobenius actions on a fixed line bundle: Let $L$ be a line bundle on a projective scheme $X$ over $k$ of dimension $n$, let $p \nmid d$ be an integer, and let $D \in \left| L^d \right|$ be a divisor. Also fix  an integer $e>0$ such that $d | p^e -1$. Then, one may define a Frobenius action induced by $D$ on $L^{-1}$ given by the following composition:
\begin{equation}
\label{eq:Frobenius_action}
\xymatrix@C=60pt{
L^{-1} \ar[r] \ar@/^1.5pc/[rr]^{\eta_{L,D}} & L^{-1}  \otimes F_*^e \sO_X 
\expl{\cong}{projection formula} 
F_*^e F^{e,*} L^{-1} \cong F_*^e L^{-p^e}
\ar[r]_(0.7){\cdot \frac{p^e-1}{d} D} & 
F^e_* L^{-1}. 
}
\end{equation}
Applying $H^n(X,\_ )$ to the this composition, we obtain a $p^e$-linear action $\psi_{L,D}$ on $H^n\left(X,L^{-1}\right)$. We denote by  $H^n\left(X, L^{-1}\right)^{\sstab,D}$  the semi-stable part with respect to $\psi_{L,D}$, that is, the image of a high-enough iteration of $\psi_{L,D}$.
We suppressed the integer $e$ from the notation of the semi-stable part, as it is an elementary exercise to see that the action  is independent of the choice of $e$ up to passing to a divisible-enough iteration. 

Recall that a perfect point $y$ of a scheme $Y$ is a morphism $\Spec (L) \to Y$ such that $L$ is a perfect field. 

\begin{lemma}
\label{lem:subsheaf_dim_bound}
Let $\sE$ be a locally free sheaf of finite rank over a Noetherian integral scheme $Y$, and let $\sF \subseteq \sE$ be a coherent subsheaf. Then for every $y \in Y$, 
\begin{equation*}
\rk \sF \geq \dim_{k(y)} \im \left( \sF \otimes k(y) \to \sE \otimes k(y)\right).
\end{equation*}
\end{lemma}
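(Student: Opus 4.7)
The plan is to reduce to a local linear-algebra statement at the point $y$. After shrinking $Y$ to an open neighborhood of $y$ on which $\sE$ becomes free, set $A := \sO_{Y,y}$ with maximal ideal $\fm$ and fraction field $K = K(Y)$, so that $\sE_y \cong A^n$ with $n = \rk \sE$, and put $M := \sF_y \subseteq N := A^n$. Since localization at the generic point is exact, $\rk \sF = \dim_K (M \otimes_A K)$, while the image in the statement is $\im(M/\fm M \to N/\fm N) = (M + \fm N)/\fm N$. Write $s$ for its $k(y)$-dimension, and lift a $k(y)$-basis of this image to elements $m_1, \ldots, m_s \in M$ whose reductions modulo $\fm N$ are linearly independent in $N/\fm N \cong k(y)^n$.

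The key step is then to prove that $m_1, \ldots, m_s$ are already $A$-linearly independent inside $N$. Consider the $A$-linear map $\phi : A^s \to N$ defined by $e_i \mapsto m_i$, represented by an $n \times s$ matrix $\Phi$. By the choice of the $m_i$, the reduction $\Phi \bmod \fm$ has $k(y)$-rank $s$, so it contains an $s \times s$ minor whose image in $k(y)$ is nonzero; that same minor is then a unit in the local ring $A$. After permuting rows to bring this minor into the top $s$ rows, the composition of $\phi$ with the projection $A^n \twoheadrightarrow A^s$ onto those rows becomes an element of $\GL_s(A)$. Thus $\phi$ is split injective, and tensoring with $K$ shows that $m_1, \ldots, m_s$ are $K$-linearly independent in $N \otimes_A K = K^n$; since they all lie in $M$, they yield $s$ independent vectors in $M \otimes_A K$.

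Assembling these observations gives
\begin{equation*}
\rk \sF \;=\; \dim_K (M \otimes_A K) \;\geq\; s \;=\; \dim_{k(y)} \im\bigl(\sF \otimes k(y) \to \sE \otimes k(y)\bigr),
\end{equation*}
which is the claim. I do not expect any real obstacle here: the argument is a routine application of the fact that, over a local ring, a matrix whose reduction modulo the maximal ideal has full column rank admits a unit minor and therefore represents a split-injective map into a free module. The only point that requires care is the bookkeeping around the fact that $\sF \otimes k(y) \to \sE \otimes k(y)$ need not be injective, so the relevant quantity on the right-hand side is an image rather than a dimension of a tensor product.
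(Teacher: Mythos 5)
Your proof is correct, but it takes a genuinely different route from the paper's. The paper passes to the quotient sheaf $\factor{\sE}{\sF}$, computes $\dim_{k(y)}\bigl(k(y)\otimes \factor{\sE_y}{\sF_y}\bigr) = \rk\sE - \dim_{k(y)}\im\bigl(\sF\otimes k(y)\to\sE\otimes k(y)\bigr)$ by right-exactness of the tensor product, and then invokes upper semicontinuity of fiber dimension for coherent sheaves (the value at the generic point is minimal) to conclude. You instead work directly with the submodule $M=\sF_y\subseteq A^n$: you lift a basis of the fiber image to elements $m_1,\dots,m_s\in M$, observe that the corresponding $n\times s$ matrix has a unit $s\times s$ minor over the local ring $A$, deduce split injectivity of $A^s\to A^n$, and tensor with the fraction field to get $s$ independent vectors in $M\otimes_A K$. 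Both arguments are sound and hinge on the same underlying comparison between the special and generic fiber; the paper's is shorter modulo the cited semicontinuity exercise, while yours is more elementary and self-contained, replacing semicontinuity by an explicit unit-minor computation. You also correctly isolate the one delicate point, namely that the right-hand side is the dimension of an image rather than of $\sF\otimes k(y)$ itself, which is exactly where the paper's use of right-exactness enters.
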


\begin{proof}
Set $I:= \im \left( \sF \otimes k(y) \to \sE \otimes k(y)\right)$, and set $r:= \dim_{k(y)}I$. We are supposed to show that $\rk \sF \geq r$.
We have 
\begin{equation}
\label{eq:subsheaf_dim_bound:first}
 \dim_{k(y)} \left( k(y) \otimes \left( \factor{\sE_y}{\sF_y} \right) \right) 
 \expl{=}{right exactness of tensor product}
 \dim_{k(y)} \left(\sE \otimes k(y) \right)  -  \dim_{k(y)}  I  
 \expl{=}{$\sE$ is locally free} 
    \rk \sE - r. 
\end{equation}
Hence, if $\eta$ is the generic point of $Y$, then the following computation concludes our proof:
\begin{equation*}\pushQED{\qed}
\rk \sE - \rk \sF = \rk  \left(\factor{\sE_y}{\sF_y} \right) 
= 
\dim_{k(\eta)} \left( k(\eta) \otimes \left(\factor{\sE_y}{\sF_y} \right) \right)
\expl{\leq}{\cite[Exercise II.5.8]{Hartshorne_Algebraic_geometry}}
\dim_{k(y)} \left( k(y) \otimes \left(\factor{\sE_y}{\sF_y} \right) \right)
\expl{=}{\autoref{eq:subsheaf_dim_bound:first}}
\rk \sE - r. \qedhere \popQED
 \end{equation*}
\renewcommand{\qed}{}     
\end{proof}

\begin{proposition}
\label{lem:stabilization_genericity}
Consider the following situation: 
\begin{enumerate}
\item Let $f \colon X \to Y$ be  a projective, flat, Gorenstein morphism  between varieties over $k$ with geometrically integral fibers of dimension $n$; 
\item \label{itm:stabilization_genericity:constant} let $L$ be a line bundle on $X$ such that $\dim_{k(y)} H^n\left(X_y, L_y^{-1} \right)$ is a constant function of $y \in Y$; 
\item let $d>0$ be an integer such that $p \nmid d$; 
\item let $D$ be an effective divisor on $X$, not containing any fiber, such that $\sO_X(D) \cong L^s $;  and
\item \label{itm:stabilization_genericity:non_zero} suppose that for some perfect point $y_0 \in Y$, we have $l:=\dim_{k(y_0)} H^n\left(X_{y_0}, L_{y_0}^{-1}\right)^{\sstab, D_{y_0}} >0$.
\end{enumerate}
Then, there is a non-empty open set $U \subseteq Y$ such that $\dim_{k(y)} H^n\left(X_{y}, L_{y}^{-1}\right)^{\sstab, D_{y}} \geq l$ for every perfect point $y \in Y$.
\end{proposition}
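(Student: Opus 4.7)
The plan is to realize the Frobenius actions $\psi_{L_y,D_y}$ fiberwise from a single $\sO_Y$-linear morphism on $Y$, cut out the semi-stable image as a coherent subsheaf of $\sE := R^n f_* L^{-1}$, and then propagate the lower bound $l$ from the fiber over $y_0$ to a generic open using \autoref{lem:subsheaf_dim_bound}. By flatness of $f$, assumption \autoref{itm:stabilization_genericity:constant}, and cohomology-and-base-change (with Gorenstein fibers forcing $R^{n+1}f_* = 0$), the sheaf $\sE$ is locally free on $Y$, and its formation commutes with arbitrary base change, so $\sE \otimes k(y) = H^n(X_y, L_y^{-1})$ for every $y$. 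Because $f \circ F^e = F^e_Y \circ f$ and $F^e$ is affine, applying $R^nf_*$ to the composition \autoref{eq:Frobenius_action} yields an $\sO_Y$-linear morphism $\sE \to (F^e_Y)_* \sE$, equivalently $\psi \colon (F^e_Y)^* \sE \to \sE$. Iterating gives $\psi^{(i)} \colon (F^{ei}_Y)^* \sE \to \sE$, and by right exactness of tensor product, for every perfect point $y \in Y$ the fiber of the image sheaf $\sF_i := \im \psi^{(i)}$ satisfies $\im(\sF_i \otimes k(y) \to \sE \otimes k(y)) = \im \psi_y^{(i)}$ inside $H^n(X_y, L_y^{-1})$; here the perfectness of $k(y)$ ensures that $(F^{ei}_Y)^*\sE \otimes k(y) \cong \sE \otimes k(y)$ as abelian groups, recovering the $p^{ei}$-linear fiberwise action.

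The chain $\sF_1 \supseteq \sF_2 \supseteq \cdots$ of coherent subsheaves of $\sE$ stabilizes at the generic point of $Y$; let $N$ be such that $(\sF_N)_\eta = (\sF_{N+1})_\eta = \cdots$, and let $\sF_\infty$ denote the common generic subsheaf, extended to coherent agreement on some non-empty open. Applying \autoref{lem:subsheaf_dim_bound} at $y = y_0$ together with assumption \autoref{itm:stabilization_genericity:non_zero} yields $\rk \sF_i \geq \dim_{k(y_0)} \im \psi_{y_0}^{(i)} = l$ for $i \gg 0$, and since $\rk \sF_i$ is non-increasing and eventually equals $\rk \sF_\infty$, this forces $\rk \sF_\infty \geq l$. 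Now shrink $Y$ to a non-empty open $U$ on which $\sF_i = \sF_\infty$ for all $i \geq N$ and on which both $\sF_\infty$ and $\sE/\sF_\infty$ are locally free. The short exact sequence $0 \to \sF_\infty \to \sE \to \sE/\sF_\infty \to 0$ remains exact after tensoring with $k(y)$, so for every perfect point $y \in U$ and every $i \geq N$ we get $\dim_{k(y)} \im \psi_y^{(i)} = \dim_{k(y)} (\sF_\infty \otimes k(y)) = \rk \sF_\infty \geq l$. Because $\{\im \psi_y^{(i)}\}_{i \geq 0}$ is a non-increasing chain of subspaces of $\sE_y$ whose intersection is $H^n(X_y, L_y^{-1})^{\sstab, D_y}$, the bound passes to the limit, giving the desired conclusion on $U$.

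The step I expect to be the main technical obstacle is the verification that the relative morphism $\psi \colon (F^e_Y)^* \sE \to \sE$ constructed by pushing forward \autoref{eq:Frobenius_action} reduces fiberwise (at perfect points) to the action $\psi_{L_y, D_y}$. This requires tracking the projection-formula and base-change isomorphisms $L^{-1}\otimes F^e_*\sO_X \cong F^e_* L^{-p^e}$ through the composition \autoref{eq:Frobenius_action}, using the identification $R^nf_*\circ F^e_* = (F^e_Y)_*\circ R^nf_*$ (valid because both Frobenii are affine), and exploiting perfectness of $k(y)$ to identify $(F^{ei}_Y)^*\sE \otimes k(y)$ with $\sE \otimes k(y)$ compatibly with the $p^{ei}$-linear structure. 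Once these compatibilities, and the attendant base-change formula for the image sheaves $\sF_i$, are in place, the remainder is standard generic flattening and Noetherian stabilization for a descending chain of coherent subsheaves.
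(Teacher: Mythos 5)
Your strategy coincides with the paper's: relativize the $p^e$-linear action to a morphism $\psi$ from a Frobenius pullback of $\sE := R^nf_*L^{-1}$ to $\sE$, identify its fibers at perfect points with $\psi_{L_y,D_y}$, apply \autoref{lem:subsheaf_dim_bound} at $y_0$ to bound the generic rank of the image sheaves from below, and spread out. The step you flag as the main obstacle (constructing $\psi$ and checking the fiberwise identification) is indeed where the paper spends most of its effort --- it must base-change along $A \to A^{1/p^{se}}$ before the $s$-fold iterate of the relative Frobenius even exists --- but the ingredients you list (affineness of Frobenius, top-degree cohomology and base change, perfectness of $k(y)$) are the right ones, so I will not count that against you.

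The genuine gap is in the stabilization step. You ``shrink $Y$ to a non-empty open $U$ on which $\sF_i = \sF_\infty$ for all $i \geq N$.'' Generic stabilization of the descending chain $\sF_N \supseteq \sF_{N+1} \supseteq \cdots$ gives, for each $i$, \emph{some} non-empty open $U_i$ with $\sF_i|_{U_i} = \sF_N|_{U_i}$; but this is infinitely many conditions, and a descending chain of coherent subsheaves of a locally free sheaf that is constant at the generic point need not be constant on any common non-empty open (the torsion quotients $\sF_N/\sF_i$ can have supports whose union is dense). Without a single open valid for all $i$, you cannot deduce $\dim_{k(y)} \im \psi_y^{(i)} \geq l$ for every $i$ at a fixed special point $y$ --- note that \autoref{lem:subsheaf_dim_bound} bounds the fiber dimension of $\sF_i$ at $y$ only from \emph{above} by $\rk \sF_i$, so the generic rank gives you nothing from below at special points --- and since the semistable part is the image of arbitrarily high iterates, no lower bound on it follows. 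The paper closes precisely this gap by exploiting the recursion $\sF_{i+1} = \psi(\text{Frobenius pullback of } \sF_i)$: once $\rk \sF_s$ is constant for $s \geq t$, one shows that the Frobenius pullback of $\im\psi^t$ meets $\ker \psi^t$ trivially (otherwise the rank would drop at step $2t$), so that after one further shrinking making $\im\psi^t$ and $\sE/\im\psi^t$ locally free, $\psi^t$ carries the pullback of $\im\psi^t$ isomorphically onto $\im\psi^t$ and hence $\im\psi^{jt} = \im\psi^t$ as subsheaves for \emph{every} $j$ on a single open. Since the semistable part is the image of any sufficiently divisible iterate, the subsequence of multiples of $t$ suffices. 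You need some version of this argument; with it in place, the rest of your outline goes through.
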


\begin{proof}
The main technical difficulty in proving \autoref{lem:stabilization_genericity} is that one needs to work with a relative version of the Frobenius morphism. That is, one needs a morphism that restricts on each (perfect) fiber to the Frobenius morphism of the corresponding fiber. This morphism is called the $\supth{e}$ relative Frobenius morphism $F_{\rel}^e$ of $f$, and it exists only after an adequate iterated Frobenius base-change, as shown on the following diagram:
\begin{equation*}
\xymatrix@R=10pt@C=50pt{
X^e \ar[rdd]_{f^e} \ar[rd]|{F_{\rel}^e} \ar[rrd]^{F^e_X}\\
& X \times_Y Y^e  \ar[r] \ar[d]  & X \ar[d]^f \\
& Y^e \ar[r]_{F^e_Y} & Y\rlap{.}
}
\end{equation*}
  Additionally, the depth of this base-change depends on the considered iteration of the Frobenius action on the fibers. Keeping track of these base-changes is notationally somewhat burdensome. 

So, we have to work with Frobenius pullbacks of the base. Hence we adopt the following notation:
\begin{itemize}
\item  Choose an integer $e >0$ such that $d | p^e -1$.
\item Set $r:= \frac{p^e-1}{d}$. 
\item  Since the statement is local, we may assume that $Y$ is affine and regular, with $A= \Gamma(Y, \sO_Y)$. 
\item Set $\phi:= F^e$ and $A_i:=A^{1/p^{ie}}=F^{ie}_* A$. In fact, by abuse of notation, we will use $\phi$ for $F^e_S$, where $S$ is any of the schemes appearing in the proof.
\item Let $\xi$ be the natural morphism $A \to A_1$ sending $x$ to $x  = \left( x^{1/p^{ie}} \right)^{p^{ie}}$. 
\end{itemize}
Consider the following commutative diagram of relative Frobenii of $f$, where we used the commutative algebra notation on the right side and the algebraic geometry notation on the left side. In fact, as all considered schemes are finite inseparable over $X$, diagram \autoref{eq:stabilization_genericity:rel_Frobenius_structure_sheaf} contains  the pushforwards  to $X$ of the structure sheaves of the considered spaces, instead of the spaces themselves. Also note that for the whole proof, pushforward is understood to have higher priority in the order of operations than tensor product.
\begin{equation}
\label{eq:stabilization_genericity:rel_Frobenius_structure_sheaf}
\xymatrix@R=35pt{
\sO_X \otimes_A  A_s  
\ar[d]^-{\xi \otimes_{ A_1} A_s} 
\ar@{=}[r] & \sO_X \otimes_A A^{1/p^{se}} \ar[d] \\
 \phi_*\sO_X\otimes_{ A_1}  A_s 
=  \phi_* \left( \sO_X\otimes_A  A_{s-1} \right)
\ar[d]^-{\phi_* \xi \otimes_{A_2} A_s 
=\phi_* \left( \xi \otimes_{A_1} A_{s-1} \right)}  
\ar@{=}[r] & \sO_X^{1/p^e} \otimes_{A^{1/p^e}} A^{1/p^{se}} \ar[d] 
\\
\vdots
\ar[d]^-{\phi^{s-2}_* \xi \otimes_{A_{s-1}} A_s
=\phi^{s-2}_* \left( \xi \otimes_{A_1} A_2 \right)}  
& \vdots \ar[d]  \\
 \phi_*^{s-1} \sO_X \otimes_{ A_{s-1}}  A_s 
=  \phi_*^{s-1} \left( \sO_X \otimes_A  A_{1} \right)
 \ar[d]^-{\phi^{s-1}_* \xi}
\ar@{=}[r] & \sO_X^{1/p^{(s-1)e}} \otimes_{A^{1/p^{(s-1)e}}} A^{1/p^{se}} \ar[d] 
 \\
\phi^{s}_*  \sO_X \ar@{=}[r] & \sO_X^{1/p^{se}}\rlap{.}
}
\end{equation}
The notable feature of diagram \autoref{eq:stabilization_genericity:rel_Frobenius_structure_sheaf} is the following:
\begin{equation}
\label{eq:stabilization_genericity:Frobenii_of_fibers}
\parbox{400pt}{
For any perfect point $y \in Y$ (or equivalently a non-zero $k$-algebra homomorphism $A \to L=:k(y)$, where $L$ is a perfect field), by restricting \autoref{eq:stabilization_genericity:rel_Frobenius_structure_sheaf} to $y^{1/p^{se}}$ \big(or equivalently by applying $(\_) \otimes_{A^{1/p^{se}}} k(y)^{1/p^{se}}$\big), we obtain the iterated relative Frobenii of $X_y$, and then by the perfectness of $k(y)$, we may identify these morphisms with the iterated absolute Frobenii of $X_y$.
 }
 \end{equation}
Now tensor  the left side of \autoref{eq:stabilization_genericity:rel_Frobenius_structure_sheaf} by $L^{-1}$ over $A$. Using a considerable amount of projection formulas together with the fact that $\phi^* L^{-1} \cong L^{-p^e}$, we obtain the following commutative diagram, where we define  $\zeta\colon L^{-1} \otimes_A A_1 \to \phi_* L^{-p^e}$ by $\zeta:= \xi \otimes_A L^{-1}$: 
\begin{equation}
\label{eq:stabilization_genericity:rel_Frobenius_L_no_pushforward_no_D}
\xymatrix@R=35pt{
L^{-1} \otimes_A  A_s 
\ar[d]^-{\zeta \otimes_{ A_1} A_s} \\
 \phi_* L^{-p^e} \otimes_{ A_1}  A_s =  \phi_* \left( L^{-p^e} \otimes_A  A_{s-1} \right)
\ar[d]^-{\phi_* \zeta \otimes_{A_2} A_s 
=\phi_* \left( \zeta \otimes_{A_1} A_{s-1} \right)}  
\\
\vdots
\ar[d]^-{\phi^{s-2}_* \zeta \otimes_{A_{s-1}} A_s
=\phi^{s-2}_* \left( \zeta \otimes_{A_1} A_2 \right)}  
\\
 \phi_*^{s-1} L^{-p^{(s-1)e}} \otimes_{ A_{s-1}}  A_s =  \phi_*^{s-1} \left( L^{-p^{(s-1)e}} \otimes_A  A_{1} \right)
 \ar[d]^-{\phi^{s-1}_* \zeta} \\
\phi^{s}_*  L^{-p^{se}}\rlap{.}
}
\end{equation}
Now modify   \autoref{eq:stabilization_genericity:rel_Frobenius_L_no_pushforward_no_D} so that after each homomorphism, we apply multiplication by $rD$ (after also applying  adequate $\phi^i_*(\_)$ and $(\_) \otimes_{A_j} A_s$). This way we obtain the following diagram, where $\eta$ is the composition of $\zeta$ with multiplication by  $rD_{A_1}$:
\begin{equation}
\label{eq:stabilization_genericity:rel_Frobenius_L_no_pushforward}
\xymatrix@R=35pt{
L^{-1} \otimes_A  A_s 
\ar[d]^-{\eta \otimes_{ A_1} A_s} \\
 \phi_* L^{-1} \otimes_{ A_1}  A_s =  \phi_* \left( L^{-1} \otimes_A  A_{s-1} \right)
\ar[d]^-{\phi_* \eta \otimes_{A_2} A_s
=\phi_* \left( \eta \otimes_{A_1} A_{s-1} \right)}  
\\
\vdots
\ar[d]^-{\phi^{s-2}_* \eta \otimes_{A_{s-1}} A_s
=\phi^{s-2}_* \left( \eta \otimes_{A_1} A_2 \right)}  
\\
 \phi_*^{s-1} L^{-1} \otimes_{ A_{s-1}}  A_s =  \phi_*^{s-1} \left( L^{-1} \otimes_A  A_{1} \right)
 \ar[d]^-{\phi^{s-1}_* \eta} \\
\phi^{s}_*  L^{-1}\rlap{.}
}
\end{equation}
Using the notation of \autoref{eq:stabilization_genericity:Frobenii_of_fibers},  we obtain that 
\begin{equation}
\label{eq:stabilization_genericity:Frobenii_of_fibers_L}
\parbox{400pt}{
the restriction of the homomorphisms of \autoref{eq:stabilization_genericity:rel_Frobenius_L_no_pushforward} over $y$ can be identified with the iterations of $\eta_{L_y,D_y}$.
}
\end{equation}
Now apply  $R^nf_*(\_)$ to \autoref{eq:stabilization_genericity:rel_Frobenius_L_no_pushforward}. Note that as $n$ is the dimension of all fibers of $f$, in this situation $R^nf_*(\_)$ commutes with arbitrary base-change for coherent sheaves flat over $Y$.  Hence, if we define $\sE:= R^n f_* L^{-1}$, then we obtain, for every integer $0 \leq i \leq s$, 
\begin{multline}
\label{eq:stabilization_genericity:coho_and_base_change}
R^n f_* \left(\phi_*^i L^{-1} \otimes_{A_i} A_s \right)  
\cong
R^nf_* \phi_*^i \left( L^{-1} \otimes_A A_{s-i} \right) 
\explshift{-80pt}{\cong}{$f \circ \phi = \phi \circ f$ and $\phi$ is affine}
\phi^i_* R^n f_* \left(L^{-1} \otimes_A A_{s-i}\right)
\\ \explshift{-140pt}{\cong}{$R^n f_* (\_)$ commutes with arbitrary base-change for coherent sheaves flat over $Y$}
\phi^s_* \phi^{s-i,*}  R^n f_* L^{-1}
=
\phi^s_* \phi^{s-i,*} \sE.
\end{multline}
Similarly, by defining $\psi = R^n f_*  (\eta)$, we obtain, for every integer $0 \leq i \leq s-1$, 
\begin{equation}
\label{eq:stabilization_genericity:coho_and_base_change_map}
R^n f_* \phi^{i}_* \left( \eta \otimes_{A_1} A_{s-i} \right) = \phi^s_* \phi^{s-i-1,*}(R^n f_*(\eta)) = \phi^s_* \phi^{s-i-1,*}(\psi).
\end{equation}
 Combining \autoref{eq:stabilization_genericity:rel_Frobenius_L_no_pushforward}, \autoref{eq:stabilization_genericity:coho_and_base_change}, and \autoref{eq:stabilization_genericity:coho_and_base_change_map}, and  disregarding the $\phi^s_*$, we obtain the following  commutative diagram:
\begin{equation}
\label{eq:stabilization_genericity:rel_Frobenius_L}
\xymatrix@C=65pt{
\phi^{s,*} \sE \ar@/_1pc/[rrrr]_{\psi^s} \ar[r]^-{\phi^{s-1,*}(\psi)} & 
\phi^{s-1,*} \sE \ar[r]^-{\phi^{s-2,*}(\psi)} & \cdots
\ar[r]^-{\phi^*(\psi)} & \phi^* \sE  \ar[r]^-{\psi} &
\sE. 
}
\end{equation} 
Using the notation of \autoref{eq:stabilization_genericity:Frobenii_of_fibers} and \autoref{eq:stabilization_genericity:Frobenii_of_fibers_L}, as $R^nf_*( \_)$ commutes with arbitrary base-change, we see that 
\begin{equation}
\label{eq:stabilization_genericity:Frobenii_of_fibers_H_n_L}
\parbox{400pt}{ the restriction of the homomorphisms of \autoref{eq:stabilization_genericity:rel_Frobenius_L} over $y$ can be identified with the iterations $\psi_{L_y,D_y}$. 
}
\end{equation}
However, a warning should be given here: \autoref{eq:stabilization_genericity:Frobenii_of_fibers_H_n_L} does not mean that $(\im \psi_y ) \otimes k(y)  = \im \psi_{L_y, D_y}$, where $\psi_{L_y, D_y}$ is defined in \autoref{eq:Frobenius_action}. In fact,  we have 
\begin{equation}
\label{eq:stabilization_genericity:image}
\im \left( \psi_{L_y, D_y} \right)^s = \im \left(  \left(\im \psi^s \right)  \otimes k(y) \lra \sE  \otimes k(y) \cong H^n\left(X_y, L_y^{-1}\right) \right).
\end{equation}
Now note  that assumption \autoref{itm:stabilization_genericity:constant} implies that $\sE$ is locally free. 
Then consider  \autoref{eq:stabilization_genericity:image} for the special case $y=y_0$. By assumption \autoref{itm:stabilization_genericity:non_zero} and \autoref{lem:subsheaf_dim_bound}, we obtain that $\rk (\im \psi^s) \geq l$.  In particular, as $\rk (\im \psi^s)$ is a monotone decreasing function of $s$, there is an integer $t$ such that  $\rk (\im \psi^s)$ is the same positive number for every $s \geq t$.  Note that by assumption \autoref{itm:stabilization_genericity:non_zero}, this number is at least $l$.

We claim  the following disjointness of subsheaves  of $\phi^{t,*} \sE$:
\begin{equation}
\label{eq:stabilization_genericity:disjoint}
\left( \phi^{t,*} \im \psi^t \right) \cap \ker \psi^t =0.
\end{equation}
Indeed, if the intersection was not zero, then as $\sE$ is locally free, the intersection would have positive rank, and hence $\im \psi^{2t} = \psi^t\left(  \phi^{t,*} \im \psi^t \right)$ would have rank smaller than that of $\im \psi^t$. This is  impossible by the choice of $t$, showing \autoref{eq:stabilization_genericity:disjoint}.

Equation \autoref{eq:stabilization_genericity:disjoint} implies that $\psi_t|_{\phi^{t,*} \im \psi^t} \colon \phi^{t,*} \im \psi^t  \to \im \psi^t$ is an isomorphism. Hence, for any integer $j>0$, we have $\im \psi^{jt} = \im \psi^t$. Additionally, by shrinking $Y$ we may assume that both $\im \psi^t$  and $\factor{\sE}{\im \psi^t}$ are locally free. In particular, for all $y \in Y$, $\left( \im \psi^t \right) \otimes k(y) \to \sE \otimes k(y)$ is an injection. Then \autoref{eq:stabilization_genericity:image} shows that for every integer $j>0$, we have $\dim_{k(y)} \im \left( \psi_{L_y, D_y} \right)^{jt}  = \rk \left(\im \psi^t\right) \geq l$. This concludes our proof. 
\end{proof}

\subsection{Non-vanishing of a specific Frobenius action}
\label{sec:non_vanishing_Frob_action}

\begin{remark}
\label{rem:Frobenius_trace_locally}
In what follows, the following fact will be essential: If $X$ is any variety and $x \in X_{\reg}$ is a closed point and $t_1,\dots, t_n$ is a system of regular parameters at $x$, then the trace homomorphism $\Tr_{F^e}\colon F_*^e \omega_X \to \omega_X$ can be identified in the formal neighborhood of $x$ with the following:
\begin{equation*}
F_*^e k \llbracket x_1,\dots,x_n \rrbracket  \ni \prod_{i=1}^n x_i^{j_i} \longmapsto 
\left\{
\begin{array}{lp{10pt}l}
\prod_{i=1}^n x_i^{\frac{j_i - p^e -1}{p^e}}  \in  k \llbracket x_1,\dots,x_n \rrbracket & &  \textrm{if } p^e | j_i - p^e -1  \quad (\forall i),  \\[8pt]
0 \in  k \llbracket x_1,\dots,x_n \rrbracket  & & \textrm{otherwise}. 
\end{array}
\right.
\end{equation*}
\end{remark}

\begin{proposition}
\label{prop:non_zero_semi_stable_part}
Let $X$ be a projective $S_2$ variety of dimension $n$. Let $\sH$ be an ample line bundle, and let $l>0$ be an integer.  Then, for every integer $s \gg 0$, the following holds: For any integer $p\nmid d >0$ and for general $D \in \left|\sH^{sd} \right|$, we have $\dim_k H^n (X, \sH^{-s} )^{ \sstab,D} \geq l$. 
\end{proposition}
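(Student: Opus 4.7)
The plan is to split the proof into two stages: first construct a specific divisor $D_0 \in |\sH^{sd}|$ satisfying $\dim_k H^n(X,\sH^{-s})^{\sstab,D_0} \geq l$, and then use \autoref{lem:stabilization_genericity} to conclude the same inequality for general $D$. By Grothendieck--Serre duality for projective varieties (via the canonical dualizing sheaf $\omega_X$ of the $S_2$ variety $X$), the $p^e$-linear endomorphism $\psi_{\sH^s,D}$ of $H^n(X,\sH^{-s})$ is dual to the $k$-linear endomorphism
\begin{equation*}
\Phi_D : H^0(X,\omega_X \otimes \sH^s) \longrightarrow H^0(X,\omega_X \otimes \sH^s), \qquad \sigma \mapsto \Tr_{F^e}\!\bigl(\tfrac{p^e-1}{d} D \cdot \sigma\bigr).
\end{equation*}
Showing that $\dim_k H^n(X,\sH^{-s})^{\sstab,D} \geq l$ is therefore equivalent to showing that the stable image of $\Phi_D$ has dimension at least $l$.

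For the construction of $D_0$, pick $l$ distinct smooth points $x_1,\dots,x_l$ of $X$ and a regular system of parameters $t_{i,1},\dots,t_{i,n}$ at each $x_i$. The natural exponent choice is $a_{i,j} := d$, since then $\tfrac{p^e-1}{d} a_{i,j} = p^e - 1$, which is exactly the congruence class modulo $p^e$ that makes the local formula of \autoref{rem:Frobenius_trace_locally} survive; concretely, multiplying a local generator of $\omega_X \otimes \sH^s$ at $x_i$ by $\tfrac{p^e-1}{d} D_0$ and applying $\Tr_{F^e}$ returns a local generator at $x_i$. For $s \gg 0$, and uniformly in $d$ (since standard jet-separation estimates depend linearly on the degree of the polarization), we can choose $D_0 \in |\sH^{sd}|$ with local equation (up to a unit) equal to $\prod_j t_{i,j}^d$ at each $x_i$, together with sections $\sigma_1,\dots,\sigma_l \in H^0(X, \omega_X \otimes \sH^s)$ such that $\sigma_i$ generates the fiber of $\omega_X \otimes \sH^s$ at $x_i$ and vanishes to a prescribed very high order at every other $x_j$. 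Then $\Phi_{D_0}(\sigma_i)$ generates the fiber at $x_i$ and still vanishes to a high order at the other $x_j$, so iterating $\Phi_{D_0}$ many times, the images continue to surject onto $\bigoplus_i (\omega_X \otimes \sH^s) \otimes k(x_i)$, and consequently the stable image of $\Phi_{D_0}$ has dimension at least $l$.

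For the deformation step, apply \autoref{lem:stabilization_genericity} to the projection $f : X \times Y \to Y$, where $Y \subseteq |\sH^{sd}|$ is a non-empty open subscheme over which the universal divisor is flat and disjoint from $X_{\sing}$ on every fiber, with $L$ the pullback of $\sH^s$ to $X \times Y$ and with our $d$ playing the role of the lemma's exponent. The constancy of $\dim_k H^n(X,\sH^{-s})$ on fibers is automatic for this trivial family, and the perfect point $y_0 \in Y$ representing the special divisor $D_0$ of the previous paragraph supplies the positive-dimensionality hypothesis. The lemma then produces a non-empty open subset $U \subseteq Y$ all of whose perfect closed points $D$ satisfy $\dim_k H^n(X,\sH^{-s})^{\sstab,D} \geq l$, yielding the proposition.

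The main technical obstacle lies in the second paragraph: arranging the one-step image of $\Phi_{D_0}$ to have dimension $\geq l$ is easy, but one must verify that the span of the $\sigma_i$ continues to contribute to the image after arbitrarily many iterations. This is what forces the sections $\sigma_i$ to vanish at the other $x_j$ to an order large enough to outweigh the bounded local ``spread'' coming from multiplication by $\tfrac{p^e-1}{d} D_0$ and from $\Tr_{F^e}$ across all iterations needed to stabilize, and it is precisely this requirement that consumes the ``$s$ sufficiently large'' hypothesis. A secondary point to check is that \autoref{lem:stabilization_genericity} really applies to our setting even though $X$ is only $S_2$, not Gorenstein: since $Y$ is smooth and the family is the trivial product, the required constructions --- the dualizing sheaf, the Frobenius trace, and cohomology-and-base-change in top degree --- remain available under the $S_2$ hypothesis alone.
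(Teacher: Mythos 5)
Your proposal is correct and follows essentially the same route as the paper: reduce to exhibiting a single special divisor via \autoref{lem:stabilization_genericity} applied to the trivial family over an open subset of $\left|\sH^{sd}\right|$, Serre-dualize (valid in degrees $0$ and $n$ for $S_2$ varieties) to the twisted trace action on $H^0(X,\omega_X\otimes\sH^s)$, and build $D_0$ as $d$ times a divisor cutting out $\prod_i t_{i,j}$ at each of $l$ smooth points, so that an interpolation basis at those points persists under iteration. The only (harmless) difference is in the iteration step: the paper notes that \emph{first-order} vanishing at the other points is preserved verbatim by each application of the twisted trace --- multiplication by $h^{p^e-1}$ raises each local exponent by exactly $p^e-1$ and $\Tr_{F^e}$ then divides by $p^e$, so there is no accumulating ``spread'' to outweigh --- whereas your high-order-vanishing workaround is also fine but only because the descending chain of images stabilizes after at most $\dim_k H^0(X,\omega_X\otimes\sH^s)$ steps, a point you should make explicit if you keep that version.
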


\begin{proof}
Choose an integer $e>0$ such that $d | p^e-1$, and set $r:=\frac{p^e-1}{d}$.

According to \autoref{lem:stabilization_genericity}, we only have to exhibit a single $D$ as above. Additionally, with this single~$D$, we can show that the Serre-dual action has at least $l$-dimensional semi-stable part. Additionally, in degrees~$0$ and $n$, Serre duality works for $S_2$ varieties by the proof of \cite[Proposition~2.4]{Patakfalvi_Zdanowicz_Ordinary_varieties_with_trivial_canonical_bundle_are_not_uniruled}. Hence, we need to show that the following action  on $H^0(X, \omega_X \otimes  \sH^s)$ has at least $l$-dimensional semi-stable part:
\begin{equation}
\label{eq:non_zero_semi_stable_part:action}
\xymatrix@C=30pt{
H^0(X, \omega_X \otimes \sH^s) \ar[r]^(0.35){\cdot rD} & H^0\left(X, \omega_X \otimes \sH^{sp^e} \right)  \cong H^0(X, \sH^s \otimes F^e_* \omega_X ) \ar[rr]^(0.64){H^0\left(X, \Tr_{F^e} \otimes \Id_{\sH^s} \right)} 
& & H^0(X, \omega_X \otimes \sH^s). 
}
\end{equation}
Fix  pairwise-distinct closed points $x_1,\dots,x_l \in X_{\reg}$. For each $1 \leq j \leq l$,  let 
\begin{equation*}
\left\{ t_{i,j} \in m_{X,x_j} \mid i = 1, \dots, n, j=1,\dots, l \right\}
\end{equation*}
 be a regular system of parameters, and define the ideals
$I_j:=\left( \prod_{i=1}^n t_{i,j} \right) \cdot \sO_{X,x} \subseteq \sO_{X,x}$.
 After this, choose an integer $s \gg 0$ satisfying  the following conditions:
 \begin{itemize}
 \item For all $1 \leq j \leq l$, there are sections $g_j \in H^0(X, \omega_X \otimes \sH^s)$ such that 
 \begin{equation}
 \label{eq:non_zero_semi_stable_part:vanishing}
 \left(g_j\right) \otimes k\left(x_{j'}\right) = \left\{
 \begin{matrix}
 1 & \textrm{if } j=j', \\
 0 & \textrm{otherwise}, 
 \end{matrix}
 \right. 
 \end{equation}
 \item There is an $h \in H^0(X,\sH^s)$ such that for all $1 \leq j \leq l$, we have $h_{x_j}  \in I_j \setminus \left( I_j \cdot m_{X,x_j}\right)$. 
 \end{itemize}
 Let $\Gamma \in |\sH^s|$ be the divisor corresponding to $h$, and set $D:= d \Gamma$. The main point is that if we apply the  action of \autoref{eq:non_zero_semi_stable_part:action} to $g_j$, then by the above choice of $D$, this is  the same as applying the trace (or more precisely $H^0\left(X, \Tr_{F^e} \otimes \Id_{\sH^s} \right)$) to $g_j \cdot h^{p^e-1}$. As 
 \begin{equation*}
 \left(g_{j'} \cdot h^{p^e-1}\right)_{x_{j}}  \in
 \left\{
 \begin{matrix}
 I_j^{p^e-1} \setminus I_j^{p^e-1} \cdot m_{X, x_j} & \textrm{if } j'= j, \\[.5ex]
 I_j^{p^e-1} \cdot m_{X, x_j} & \textrm{otherwise}. 
 \end{matrix}
 \right.
 \end{equation*}
using   \autoref{rem:Frobenius_trace_locally}, it follows that trace takes $g_{j} \cdot h^{p^e-1}$ to a section that also satisfies  property  \autoref{eq:non_zero_semi_stable_part:vanishing}. In particular, the same holds for the image of $g_j$ via the  action of \autoref{eq:non_zero_semi_stable_part:action}. Iterating this argument, we obtain that after iterating the action of \autoref{eq:non_zero_semi_stable_part:action} arbitrarily many times, for every $1 \leq j \leq l$,  there will be a section in the image that is non-zero at $x_j$ and  is zero at $x_{j'}$ for every $j \neq j'$. This shows that the image is at least $l$-dimensional after arbitrarily many iterations, concluding our proof. 
\end{proof}

\subsection{General cyclic covers}

\begin{theorem}
\label{thm:non_uniruled_cyclic_cover}
If\, $X$ is a projective, $S_3$  variety of dimension $n$ over $k$ and $\sH$ an ample line bundle on $X$, then for every integer $s \gg 0$,  the following holds: For every integer $ p \nmid d >0$ and for every general $D \in |\sH^{sd}|$, if\, $Y$ is the  corresponding degree $d$ cyclic cover, then $H^n\left(Y, W\sO_{Y,\bQ}\right) \neq 0$.

If additionally $X$ is normal and $Y$ has $W\sO$-rational singularities, then $Y$ is not uniruled for $s \gg 0$. 
\end{theorem}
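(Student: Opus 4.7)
The plan is to verify the hypothesis
\begin{equation*}
\dim_k H^{n-1}(Y, \sO_Y)^{\sstab} < \dim_k H^n(Y, \sO_Y)^{\sstab}
\end{equation*}
of \autoref{prop:Witt_cohom_non_zero}, from which the non-vanishing $H^n(Y, W \sO_{Y, \bQ}) \neq 0$ follows. The non-uniruledness addendum then comes from the second part of \autoref{prop:Witt_cohom_non_zero}, using the assumed normality and $W \sO$-rationality of $Y$; integrality of $Y$ for a general $D$ is automatic from $p \nmid d$ (the polynomial $T^d - u$ is irreducible for a general local equation $u$ of $D$).

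Let $\pi : Y \to X$ be the cyclic cover, so that $\pi_* \sO_Y = \bigoplus_{i=0}^{d-1} \sH^{-si}$ as $\sO_X$-algebras and consequently
\begin{equation*}
H^j(Y, \sO_Y) \cong \bigoplus_{i=0}^{d-1} H^j\bigl(X, \sH^{-si}\bigr)
\end{equation*}
for every $j$. I would first bound the left-hand side of the target inequality. By Serre duality on the projective $S_3$ variety $X$, $H^{n-1}(X, \sH^{-si})^{*} \cong H^{1-n}(X, \sH^{si} \otimes \omega_X^{\bullet})$, and the hypercohomology spectral sequence $E_2^{p,q} = H^p(X, \mathcal{H}^q(\omega_X^{\bullet}) \otimes \sH^{si})$ together with the control that $S_3$ gives on the cohomology sheaves of $\omega_X^{\bullet}$ reduces the target to terms with $p \geq 1$; Serre vanishing on the ample twists $\sH^{si}$ (with $si \geq s$) then yields $H^{n-1}(X, \sH^{-si}) = 0$ for every $1 \leq i \leq d - 1$ as soon as $s \gg 0$. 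Hence $H^{n-1}(Y, \sO_Y) = H^{n-1}(X, \sO_X)$ and its semistable part has dimension bounded by the fixed constant $c := \dim_k H^{n-1}(X, \sO_X)^{\sstab}$, independent of $s$, $d$, and $D$.

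For the right-hand side the key step is to identify the Frobenius endomorphism of the summand $H^n(X, \sH^{-s}) \subset H^n(Y, \sO_Y)$ with the twisted action $\psi_{\sH^s, D}$ from \autoref{sec:deformation_Frobenius_action}. Fix $e > 0$ with $d \mid p^e - 1$ and put $r = (p^e - 1)/d$. Trivializing $\sH^{-s}$ locally by a generator $\tau$, so that $\sO_Y = \sO_X[\tau]/(\tau^d - u)$ with $u$ a local equation of $D$, one computes
\begin{equation*}
F^e(\tau) = \tau^{p^e} = \tau \cdot (\tau^d)^r = \tau \cdot u^r.
\end{equation*}
Consequently the $e$-th Frobenius iterate preserves the weight-$(-s)$ summand of $\pi_* \sO_Y$ and, on it, is globally the composition $\eta_{\sH^s, D}$ of \autoref{eq:Frobenius_action}; applying $H^n(X, -)$ realizes the induced endomorphism of $H^n(X, \sH^{-s})$ as $\psi_{\sH^s, D}$. \autoref{prop:non_zero_semi_stable_part} (available since $S_3 \Rightarrow S_2$) applied with $l = c + 1$ then produces, for $s \gg 0$ and $D$ general, the bound $\dim_k H^n(X, \sH^{-s})^{\sstab, D} \geq c + 1$, and hence $\dim_k H^n(Y, \sO_Y)^{\sstab} \geq c + 1 > c \geq \dim_k H^{n-1}(Y, \sO_Y)^{\sstab}$, completing the verification.

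The main obstacle I anticipate is the identification in the third step: Frobenius initially sends the weight-$(-s)$ summand of $\pi_* \sO_Y$ into the weight-$(-s p^e)$ piece, so the cyclic algebra relation $\tau^d = u$ must be used to reduce the weight back modulo $sd$, and one has to check --- with the correct $\sO_X$-linearity and Frobenius twist conventions --- that the resulting global map coincides exactly with the composition $\eta_{\sH^s, D}$ on which \autoref{prop:non_zero_semi_stable_part} operates.
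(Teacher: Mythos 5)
Your proposal is correct and follows essentially the same route as the paper: decompose $\pi_*\sO_Y$ into weight summands, kill $H^{n-1}$ of the nontrivial summands via partial Serre duality for $S_3$ varieties plus Serre vanishing, identify the Frobenius action on the weight-$(-s)$ summand with $\psi_{\sH^s,D}$, apply \autoref{prop:non_zero_semi_stable_part} to make $H^n(Y,\sO_Y)^{\sstab}$ large, and conclude with \autoref{prop:Witt_cohom_non_zero}. The only difference is that you spell out the local computation $\tau^{p^e}=\tau\cdot u^r$ behind the identification of the Frobenius action on the summand, a step the paper dismisses as ``easy to see.''
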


\begin{proof}
Let $\pi \colon Y \to X$ be the considered cyclic cover. Then, we have $\pi_* \sO_Y \cong \bigoplus_{j=0}^{d-1} \sH^{-js}$, and as $D$ is general, $Y$ is normal. By the proof of \cite[Proposition~2.4]{Patakfalvi_Zdanowicz_Ordinary_varieties_with_trivial_canonical_bundle_are_not_uniruled},
 in degrees $1$ and $n-1$, Serre duality works for $S_3$ varieties. Pairing this up with Serre vanishing, we obtain that for every $s \gg 0$, we have $H^{n-1}\left(X, \sH^{-sj}\right) = 0$ for every integer $j>0$. Hence, for every integer $s \gg 0$, we have $H^{n-1}(Y, \sO_Y) \cong  H^{n-1}(X, \sO_X)$. In particular, according to 
 \autoref{prop:Witt_cohom_non_zero}, it is enough to show that for any integer $l>0$, for every integer $s \gg 0$, we have $H^n(Y, \sO_Y)^{\sstab} \neq 0$, for choices of $d$ and $D$ as in the statement of the theorem. However, it is easy to see that 
\begin{equation*}
H^n(Y, \sO_Y)^{\sstab} =H^n(X, \sO_X)^{\sstab} \oplus \left(  \bigoplus_{j=1}^{d-1} H^n\left(X,\sH^{-sj}\right)^{\sstab, D} \right).
\end{equation*}
So, we are done by \autoref{prop:non_zero_semi_stable_part}.
\end{proof}

In \autoref{thm:non_uniruled_cyclic_cover}, it is expected that it is enough to assume that $X$ has $W \sO$-rational singularities, instead of the current assumption that $Y$ has $W \sO$-rational singularities. The corresponding questions are the following. 

\begin{question}
Suppose that $X$ is a $W \sO$-rational variety over $k$. 
\begin{enumerate}
\item Does a general hyperplane section of $X$ have $W \sO$-rational singularities?
\item Is a general cyclic cover, as in \autoref{thm:non_uniruled_cyclic_cover}, $W \sO$-rational?
\end{enumerate}
\end{question}

\section{Proof of \autoref{thm:main_intro} and \autoref{cor:subadditivity}} 
\label{sec:proof}

\subsection{Lemmas}

\begin{lemma}
\label{lem:uniruled}
Let $f \colon X \to T$ be a surjective projective morphism of  varieties 
such that both $T$ is not uniruled and the geometric generic fiber is integral. If\, $X$ is uniruled, then so is the geometric generic fiber $X_{\overline{\eta}}$ of $f$. 
\end{lemma}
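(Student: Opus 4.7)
The plan is to exploit the dichotomy that the rational curves covering $X$ must either be contracted to points by $f$ (and hence lie in fibres of $f$), or else sweep out a covering family of rational curves on $T$, contradicting the non-uniruledness of $T$.

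Concretely, since $X$ is uniruled, I would first choose a variety $Z$ with $\dim Z = \dim X - 1$ together with a dominant rational map $u\colon \mathbb{P}^1 \times Z \dashrightarrow X$. Composing with $f$ gives a dominant rational map $g := f\circ u\colon \mathbb{P}^1\times Z \dashrightarrow T$. The next step is to show that for general $z\in Z$ the restriction $g|_{\mathbb{P}^1\times\{z\}}$ is constant: if instead these restrictions were non-constant on some dense open $U\subseteq Z$, then $g|_{\mathbb{P}^1\times U}$ would exhibit a dominant family of rational curves on $T$ (dominance of the whole $g$ on $\mathbb{P}^1 \times U$ follows from the density of $U$ and the fact that a rational map has image containing an open of its closure), contradicting that $T$ is not uniruled. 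Hence $g$ factors rationally through the projection to $Z$: picking a rational section $s\colon Z\dashrightarrow \mathbb{P}^1\times Z$ (say $z\mapsto(\infty,z)$), the composition $\phi := g\circ s\colon Z\dashrightarrow T$ satisfies $g=\phi\circ\mathrm{pr}_Z$ as rational maps on a dense open.

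I would then pass to generic fibres. Because $\phi$ is dominant (since $g$ is and $\mathrm{pr}_Z$ is surjective), its generic fibre $Z_\eta$ is a $K(T)$-scheme of dimension $\dim X-1-\dim T$, and the preimage of $\eta$ under $g$ is $\mathbb{P}^1_{K(T)}\times Z_\eta$, of dimension $\dim X_\eta$. Restricting $u$ produces a rational map $u_\eta\colon \mathbb{P}^1_{K(T)}\times Z_\eta \dashrightarrow X_\eta$; this is dominant because at the level of function fields $K(X)=K(X_\eta)$ (the generic fibre is integral, as its geometric generic fibre is by hypothesis) and $K(\mathbb{P}^1\times Z)=K(\mathbb{P}^1\times Z_\eta)$, so the dominance of $u$ forces that of $u_\eta$. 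Base-changing to $\overline{K(T)}$ preserves dominance by faithful flatness, and since $X_{\overline{\eta}}$ is integral, at least one irreducible component $Z'$ of $(Z_\eta)_{\overline{\eta}}$ has the property that $\mathbb{P}^1_{\overline{\eta}}\times Z' \dashrightarrow X_{\overline{\eta}}$ is still dominant. This exhibits $X_{\overline{\eta}}$ as uniruled.

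The main subtlety I expect lies in the factoring step, namely promoting the generic constancy of $g$ along the $\mathbb{P}^1$-fibres into an honest rational map $\phi\colon Z\dashrightarrow T$ with $g=\phi\circ\mathrm{pr}_Z$; this is handled by evaluating $g$ on a rational section, as above. The subsequent dimension count and selection of an irreducible component over $\overline{\eta}$ is routine bookkeeping enabled by the integrality hypothesis on the geometric generic fibre.
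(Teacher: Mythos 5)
Your argument is correct, but it takes a genuinely different route from the paper's. The paper first reduces to an uncountable base field, uses \cite[Thm IV.1.8.1]{Kollar_Rational_curves_on_algebraic_varieties} to translate the claim about the geometric generic fibre into one about very general closed fibres, and then argues pointwise: through every closed point of a dense open $U\subseteq X$ there is a rational curve (IV.1.3.5), while non-uniruledness of $T$ confines the images of all rational curves of $T$ to a countable union of proper closed subsets $\bigcup_i R_i$ (IV.1.3.6); hence the curve through any point of $U\setminus f^{-1}\left(\bigcup_i R_i\right)$ is contracted by $f$ and lies in a fibre. You implement the same geometric dichotomy --- rational curves covering $X$ are either vertical or sweep out $T$ --- but at the level of the uniruling $u\colon \mathbb{P}^1\times Z\dashrightarrow X$ itself, factoring $f\circ u$ through $\mathrm{pr}_Z$ and then restricting to generic fibres. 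This buys you two things: you never need to pass to an uncountable field, and you land directly on the geometric generic fibre instead of on very general closed fibres. The small price is that in the non-constant branch of your dichotomy the dominant family $\mathbb{P}^1\times U\dashrightarrow T$ has a parameter space of dimension $\dim X-1$ rather than $\dim T-1$, so to conclude that $T$ is uniruled you must invoke the characterization of uniruledness allowing a parameter space of arbitrary dimension and requiring only that the map not factor through the projection \cite[Prop IV.1.3]{Kollar_Rational_curves_on_algebraic_varieties}; and at the end you should equip the chosen component $Z'$ of $(Z_\eta)_{\overline{\eta}}$ with its reduced structure (all such components have dimension $\dim X_{\overline{\eta}}-1$) so that the uniruling of $X_{\overline{\eta}}$ is by a variety. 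Both points are routine, and the rest of your bookkeeping with generic fibres, function fields and flat base change is sound.
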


\begin{proof}
First, note that by the definition \cite[Definition IV.1.1]{Kollar_Rational_curves_on_algebraic_varieties}, being uniruled is invariant under finite base-extension of geometrically integral varieties. Also taking into account \cite[Proposition IV.1.3]{Kollar_Rational_curves_on_algebraic_varieties}, one can even drop the word ``finite.''  In particular, we may assume that $k$ is uncountable.

Second, note that by shrinking $T$, we may assume that $f$ is flat. Then \cite[Theorem 1.8.1]{Kollar_Rational_curves_on_algebraic_varieties} tells us that there are countably many subsets $\bigcup_i T_i$ such that $X_t$ is uniruled if and only if $t \not\in T_i$. Hence, it is enough to show that $T_i \neq T$ for all $i$, or equivalently that the very general closed fiber is uniruled. 

As $T$ is not uniruled, according to \cite[Proposition~IV.1.3.6]{Kollar_Rational_curves_on_algebraic_varieties}
we are in the following situation: If $R_i$ is the closure of the image of the cycle map from the universal family over the space of degree $i$ rational curves on $T$,  then the following hold: 
\begin{itemize}
\item $R_i \neq T$ for every $i$. 
\item  There is  no rational curve in $T$ through any $t \in T \setminus \left( \bigcup_i R_i\right)$.
\end{itemize}
Furthermore, according to \cite[Proposition~IV.1.3.5]{Kollar_Rational_curves_on_algebraic_varieties},
there is an open set $U \subseteq X$ such that there is a rational curve through each $x \in U(k)$ in $X$. Hence, for any $k$-point $x \in U \setminus \left(\bigcup_i f^{-1} R_i \right)$,  there is a rational curve $C_x$ through $x$, but $f(C_x)$ cannot give a rational curve through $f(x)$. Therefore, $C_x \subseteq X_{f(x)}$ must hold. This shows that for any $k$-point $t \in   T \setminus \left( \bigcup_i R_i \right)$, there is a rational curve in $X_t$ through every closed point of $U \cap X_t$. Additionally,  for general such $t$, we have $U \cap X_t \neq \emptyset$. Hence, we obtain that very general fibers of $f$ are uniruled.
\end{proof}

\begin{lemma}
\label{lem:pullback_variety}
Let $f \colon X \to T$ be a surjective  morphism of  projective varieties with integral geometric generic fiber, and let $S \to T$ be a finite flat morphism of varieties. Then $X \times_T S$ is a projective variety $($that is, it is integral\,$)$. 
\end{lemma}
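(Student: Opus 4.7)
The plan is to dispatch projectivity and integrality separately, with integrality reducing to the statement that a certain tensor product of fields is a field.

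Projectivity is immediate: since $S \to T$ is finite, $S$ is proper over $k$; combined with $S$ being quasi-projective (being a variety in the paper's sense), this makes $S$ projective. The second projection $X \times_T S \to S$ is then the base change of the projective morphism $f$, hence projective, and so $X \times_T S$ is projective over $k$.

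For integrality, let $q : X \times_T S \to X$ be the first projection; this is finite and flat, being a base change of $S \to T$. Hence every associated point of $X \times_T S$ maps to an associated point of $X$, and since $X$ is integral with unique associated point $\eta_X = \Spec K(X)$, all associated points of $X \times_T S$ lie in the fiber $q^{-1}(\eta_X)$. A preliminary computation shows $S \times_T \Spec K(T) = \Spec K(S)$: locally $S = \Spec A'$ over $T = \Spec A$ with $A'$ finite over $A$, so $A' \otimes_A K(T)$ is a finite $K(T)$-algebra sitting inside $\Frac(A') = K(S)$, and being a finite domain over a field it is a field containing $A'$, hence equal to $K(S)$. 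Therefore $q^{-1}(\eta_X) = \Spec \left( K(X) \otimes_{K(T)} K(S) \right)$.

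It remains to show this last ring is a field. The integrality of $X_{\overline{\eta}}$ is equivalent to $X_\eta$ being geometrically integral over $K(T)$, so $X_\eta \times_{K(T)} L$ is integral for every field extension $L / K(T)$. Applying this with $L = K(S)$ and localizing the coordinate ring of an affine patch of $X_\eta \times_{K(T)} K(S)$ at the image of $\eta_X$ exhibits $K(X) \otimes_{K(T)} K(S)$ as a localization of a domain, hence itself a domain; since $K(S)$ is finite over $K(T)$, it is moreover a finite $K(X)$-algebra, and therefore a field. Combining, $X \times_T S$ is Noetherian with a unique associated point whose residue field is a field, and any such scheme is irreducible and reduced, i.e.\ integral. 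The only subtle point is the reduction of geometric integrality to $K(X) \otimes_{K(T)} K(S)$ being a domain; everything else is bookkeeping.
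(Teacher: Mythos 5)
Your proof is correct and follows essentially the same route as the paper's: geometric integrality of the generic fiber handles the generic behaviour, and flatness of the finite projection $X\times_T S\to X$ rules out embedded points. The only cosmetic difference is that you phrase the second step via ``associated points of a flat scheme lie over associated points of the base'' together with an explicit verification that $K(X)\otimes_{K(T)}K(S)$ is a field, whereas the paper phrases it as the $S_1$ condition being preserved under finite flat morphisms because depth is; these are interchangeable.
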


\begin{proof}
By the geometric generic fiber assumption, the generic fiber of $X \times_T S \to S$ is integral; see \cite[Exercise~II.3.15]{Hartshorne_Algebraic_geometry}. Hence, $X \times_T S$ is integral at its generic point, and then it is enough to see that $X \times_T S$ satisfies the $S_1$ property. Indeed,  $S_1$ varieties have no embedded points by the definition of the $S_1$ property.

However, the fact that $X \times_T S$ is $S_1$ follows directly from the behavior of the depth along flat maps. In the special case of finite flat morphisms, which is $X \times_T S \to X$ in our case, the depth is simply the same for points lying over each other; see \cite[Proposition~6.3.1]{Grothendieck_Elements_de_geometrie_algebrique_IV_II}. This concludes our proof. 
\end{proof}

\subsection{The proof}
\label{ssec:proof}

Recall that a morphism of varieties is \emph{separable} if the geometric generic fiber is reduced. Also, by the definition in \autoref{sec:local_compl_int}, local complete intersection singularities are Gorenstein. According to point \autoref{itm:complete_intersection_singularities:relative_absolute} of \autoref{lem:complete_intersection_singularities}, if $f\colon X \to T$ is a surjective local complete intersection morphism from a variety to a smooth variety, $X$ also has  local complete intersection singularities. Hence, in this case $X$ is Gorenstein. Recall additionally that  the general definitions of $\omega_{X/T}$ and $\omega_X $ are $\omega_{X/T}:=\sH^{-\dim X/T}\left(f^! \sO_T\right)$ and $\omega_{X}:=\sH^{-\dim X}\left(g^! \sO_{\Spec (k)}\right)$, where $g \colon X \to \Spec (k)$ is the structure morphism.  Therefore, whenever $T$ is Gorenstein, and hence $\omega_T$ is a line bundle,  by \cite[Theorem 5.4]{Neeman_The_Grothendieck_duality_theorem}, we have
\begin{equation}
\label{eq:rel_abs_canonical}
\omega_{X/T} = \omega_X \otimes f^* \omega_T^{-1}.
\end{equation}
 In the above special situation,  however, we also know that both sides of \autoref{eq:rel_abs_canonical} are  line bundles, by the Gorenstein property. 
In particular, in this case, there is a well-defined linear equivalence class $K_{X/T}$ of Cartier divisors corresponding to $\omega_{X/T}$, even if $X$ is not normal, satisfying $K_X = K_{X/T} + f^* K_T$. This is important to make sense of the statement of \autoref{thm:main}. It is also important to note that if $S \to T$ is a flat morphism, then $\omega_{X_S/S} \cong \tau^* \omega_{X/T} $, where $X_S:=X \times_T S$ and $\tau  \colon X_S \to X$ are the induced morphisms; see \cite[Theorem III.8.7(5)]{Hartshorne_Residues_and_duality}.  Therefore, one has the following base-change isomorphism on the level of divisors: 
\begin{equation}
\label{eq:base_change}
K_{X_S/S} \cong \tau^* K_{X/T}.
\end{equation}

\begin{theorem}
\label{thm:main}
Let $f \colon X \to T$ be a surjective, local complete intersection, separable, projective morphism to a smooth projective variety $T$, such that the geometric generic fiber is connected and not uniruled. 
Then $K_{X/T}$ is pseudo-effective. 
\end{theorem}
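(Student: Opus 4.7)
The plan is to derive a contradiction from assuming $K_{X/T}$ is not pseudo-effective, by producing a covering family of rational curves on $X$ and then invoking \autoref{lem:uniruled} to force the geometric generic fiber to be uniruled, contrary to hypothesis.

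First, I would reduce to the case where $T$ is non-uniruled. Apply \autoref{thm:non_uniruled_cyclic_cover} to $T$ with any ample line bundle to obtain a finite flat cover $\pi:S\to T$ with $S$ smooth projective and not uniruled. Set $X_S:=X\times_T S$ with projections $\tau:X_S\to X$ and $f_S:X_S\to S$. By \autoref{lem:pullback_variety}, $X_S$ is integral. Since $f$ is local complete intersection and $S$ is smooth, $f_S$ is local complete intersection and \autoref{lem:complete_intersection_singularities} shows that $X_S$ has local complete intersection (hence Gorenstein) singularities. The geometric generic fibers of $f$ and $f_S$ agree, so separability, connectedness, and non-uniruledness of the geometric generic fiber are preserved. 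The base-change identity \autoref{eq:base_change} gives $K_{X_S/S}=\tau^*K_{X/T}$, and since $\tau$ is finite surjective, $K_{X/T}$ is pseudo-effective if and only if $K_{X_S/S}$ is. Hence we may replace $T$ by $S$ and assume $T$ is smooth projective and not uniruled.

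Next, invoke the duality between the pseudo-effective cone of divisors and the closure of the cone of movable curves, which holds in arbitrary characteristic. If $K_{X/T}$ is not pseudo-effective, there is a movable class $\alpha$ on $X$, representable by a covering family $\{C_t\}$, with $K_{X/T}\cdot\alpha<0$. To convert this vertical negativity into negativity of the absolute canonical $K_X=K_{X/T}+f^*K_T$, perform iterated Frobenius base-change: for $e\geq 1$ set $X^{(e)}:=X\times_{T,F_T^e}T$, with projections $\sigma_e:X^{(e)}\to X$ and $f^{(e)}:X^{(e)}\to T$. Since $F_T^e$ is finite flat (smoothness of $T$) and $f$ is separable and local complete intersection, $X^{(e)}$ remains integral (\autoref{lem:pullback_variety}) and Gorenstein (\autoref{lem:complete_intersection_singularities}). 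A direct projection-formula computation, using $\deg\sigma_e=p^{e\dim T}$ and $\sigma_e^*f^*K_T=p^e\cdot f^{(e),*}K_T$, yields
\begin{equation*}
K_{X^{(e)}}\cdot\sigma_e^*C=p^{e(\dim T-1)}\bigl(p^e\,K_{X/T}\cdot C+K_T\cdot f_*C\bigr),
\end{equation*}
which is strictly negative for $e$ sufficiently large. Since $\sigma_e^*\{C_t\}$ still covers $X^{(e)}$, Miyaoka--Mori bend-and-break on the Gorenstein projective variety $X^{(e)}$ produces a rational curve through a general point, so $X^{(e)}$ is uniruled. As $\sigma_e$ is a universal homeomorphism, $X$ itself is uniruled, and \autoref{lem:uniruled} applied to $f:X\to T$ (with $T$ non-uniruled) forces the geometric generic fiber to be uniruled, the desired contradiction.

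\textbf{Main obstacle.} The genuine difficulty of the theorem lies in the first step, i.e., the existence of the smooth non-uniruled cover, which is \autoref{thm:non_uniruled_cyclic_cover} and occupies the bulk of the paper. Taking that as a black box, the remaining obstacle is the careful bookkeeping of the second step: running bend-and-break on a singular (Gorenstein) total space in positive characteristic, and verifying that iterated Frobenius base-change preserves integrality and the Gorenstein property so that intersection numbers and deformation arguments behave correctly. The local complete intersection and separability hypotheses on $f$, combined with the smoothness of $T$ secured in the first step, are precisely what make these base-changes admissible.
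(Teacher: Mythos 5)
Your proposal is correct and follows essentially the same route as the paper: reduce to a non-uniruled base via the cyclic-cover theorem, pick a movable curve negative against $K_{X/T}$ using the duality between the pseudo-effective cone and movable curves, kill the contribution of $f^*K_T$ by iterated Frobenius base change of the smooth base, and run bend-and-break on the base-changed total space to contradict non-uniruledness via \autoref{lem:uniruled}. The one point to tighten is that the singular bend-and-break you invoke (\cite[Thm IV.5.14 \& Rmk IV.5.15]{Kollar_Rational_curves_on_algebraic_varieties}) requires the local complete intersection property of $X^{(e)}$ along the curve --- which you do establish via \autoref{lem:complete_intersection_singularities} --- and not merely the Gorenstein property, so the phrase ``bend-and-break on the Gorenstein projective variety'' should refer to the lci structure.
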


\begin{proof}
Assume that $K_{X/T}$ is not pseudo-effective. We will derive a contradiction.

Recall that a Cartier divisor is  pseudo-effective if and only if its pullback under  an arbitrary finite morphism is pseudo-effective. Hence, using \autoref{lem:pullback_variety} and \autoref{eq:base_change},  we may replace $f$ by a pullback $f_S$ via any finite flat morphism $S \to T$ between smooth varieties. In particular, according to \autoref{thm:non_uniruled_cyclic_cover}, we may assume that $T$ is not uniruled.  

Let $F^n \colon T^n \to T$ be the $n$-times iterated Frobenius morphism (for all $n\geq 0$), and set $X^n:=X \times_T T^n$. Then, by \autoref{lem:uniruled}, $X^n$ are not uniruled.

If $K_{X/T}$ is not pseudo-effective, then there is a general element $C$ of a moving family of curves on $X$ with $K_{X/T} \cdot C <0 $; see  \cite[Theorem 1.4]{Das_Finiteness_of_Log_Minimal_Models_and_Nef_curves_on_3-folds_in___characteristic_p>5} and \cite[Remark 2.1]{Fulger_Seshadri_constants_for_curve_classes}. According to \cite[Theorem 1.4]{Das_Finiteness_of_Log_Minimal_Models_and_Nef_curves_on_3-folds_in___characteristic_p>5},  we may even assume that $C$ is irreducible.

 Let $C^n$ be the normalization of the reduced preimage  of $C$ in $X^n$. We note that as $\sigma^n \colon X^n \to X$ is inseparable, $C^n \to C$ is a normalization followed by a few Frobenii. In particular, $\sigma_*^n C^n = a_n C$ for some integer $a_n>0$.
 
 As $C$ is general in a moving family, $X^n$ is smooth along the general points of $C^n$, and it has complete intersection singularities according to point \autoref{itm:complete_intersection_singularities:relative_absolute} of
  \autoref{lem:complete_intersection_singularities}. Hence \cite[Theorem IV.5.14 and Remark IV.5.15]{Kollar_Rational_curves_on_algebraic_varieties} apply to $C^n$ and $X^n$ as soon as we know that $K_{X^n} \cdot C^n <0$. However, for $n \gg 0$ this is satisfied because of the following, where $f^n \colon X^n \to T^n$ is the induced morphism:
\begin{multline*}
K_{X^n} \cdot C^n = \left( K_{X^n/T^n} + \left( f^n\right)^* K_{T^n} \right) \cdot C^n 
\expl{\equiv}{\autoref{eq:base_change} and the fact that $\left(F^n \right)^* K_T \sim p^n K_T$}
\left( \left( \sigma^n \right)^* K_{X/T}    + \frac{ \left( f^n\right)^* \left( F^n\right)^* K_{T} }{p^n} \right) \cdot C^n
\\ 
\explshift{50pt}{=}{$F^n \circ f^n = f \circ \sigma^n$} 
\left( \sigma^n \right)^* \left(  K_{X/T}   + \frac{ f^* K_{T}  }{p^n} \right) \cdot  C^n 
%
%%%%% new entry %%%%%%%
%
\explshift{-40pt}{=}{projection formula}
%
%%%%% new entry %%%%%%%
%
\left( K_{X/T}   + \frac{f^* K_{T} }{p^n} \right)  \cdot \sigma^n_* C^n
%
%%%%%%%%%%%%%%% new line %%%%%%%%%%%%%%%%%%%%%%
%
 \explshift{-50pt}{=}{$\sigma^n_* C^n = a_n C$}
 \left( K_{X/T}  + \frac{f^* K_{T}}{p^n} \right)  \cdot a_n C  
\explshift{-90pt}{<0.}{for $n \gg 0$, as $K_{X/T} \cdot C < 0$}
\end{multline*}
So, according to \cite[Theorem 5.14 and Remark 5.15]{Kollar_Rational_curves_on_algebraic_varieties}, for every $n \gg 0$,  there is a rational curve through each point of $C^n$. As $C$ is general, there is a rational curve through a general point of $X^n$. Hence, $X^n$ is uniruled, which gives a contradiction. Hence, our assumption that $K_{X/T}$ is not pseudo-effective was false. 
\end{proof}

\begin{proof}[Proof of \autoref{thm:main_intro}]
This follows immediately from \autoref{thm:main} using point \autoref{itm:complete_intersection_singularities:regular_over_regular} of \autoref{lem:complete_intersection_singularities}.
\end{proof}

\begin{proof}[Proof of \autoref{cor:subadditivity}]
  As $K_{X/T}$ is $f$-big, we may write $K_{X/T} \sim_{\bQ} A + E$, where $A$ is an $f$-ample and $E$ is an effective $\bQ$-Cartier $\bQ$-divisor on $X$. Similarly, we may write $K_T = H + G$, where $H$ is an ample and $G$ is an effective $\bQ$-divisor on $T$. Hence, $K_X$ is big by the following computation, which concludes our proof:
\begin{equation*}\pushQED{\qed}
K_X = f^* K_T + K_{X/T} = f^* G+f^* H  + \varepsilon(A +E) + (1- \varepsilon) K_{X/T} = 
\explshift{-220pt}{\underbracket{\varepsilon A + f^* H}}{ample for $0 < \varepsilon \ll 1$, as $A$ is $f$-ample, and $H$ is ample}
+
\explshift{-20pt}{\underbracket{f^*G + \varepsilon E + (1- \varepsilon) K_{X/T}}}{pseudo-effective by \autoref{thm:main_intro}}\qedhere \popQED
\end{equation*}
\renewcommand{\qed}{}     
\end{proof}

%%%%%%%%%%%%%%%%%%%%%
% References
%%%%%%%%%%%%%%%%%%%%%

\end{document}